\documentclass[12pt,epsfig,tikz,multi]{amsart}
\usepackage{url}
\usepackage{verbatim}
\usepackage{latexsym}
\usepackage{hyperref}
\usepackage{amsrefs}
\usepackage{amsthm}
\usepackage{breqn}
\usepackage{float}
\restylefloat{table}
\usepackage{amsmath,amscd,amssymb}
\usepackage{graphicx}
\usepackage{caption}
\usepackage{subcaption}
\usepackage{multirow}
\usepackage{enumitem}
\usepackage{xcolor}
\usepackage{graphicx,wrapfig,lipsum}
\usepackage{tikz}
\usepackage{diagbox}
\usepackage{graphicx}
\usepackage{float}
\usepackage{import}
\usepackage{xifthen}
\usepackage{pdfpages}
\usepackage{transparent}
\usepackage{amsmath, amssymb, mathtools}
\usepackage{graphicx}
\usepackage{float}
\usepackage{import}
\usepackage{xifthen}
\usepackage{pdfpages}
\usepackage{transparent}

\newcommand{%
    \def\svgwidth{1\columnwidth}
    \import{./}{.pdf_tex}
}[2][1]{%
    \def\svgwidth{#1\columnwidth}
    \import{./}{#2.pdf_tex}
}

\textwidth=15cm
\hoffset=-1.6cm
 2

\newtheorem{theorem}{Theorem}[section]
\newtheorem{prop}{Proposition}[section]

\newtheorem{definition}[theorem]{Definition}

\newtheorem{corollary}[theorem]{Corollary}

\newtheorem{fact}[theorem]{Fact}

\numberwithin{equation}{section}
\def \C {{\mathbb {C}}}
\def \R {{\mathbb {R}}}
\def \H {{\mathbb {H}}}

\title{Higher genus maxfaces with Enneper end}

\author[R. Bardhan]{Rivu Bardhan}
\address{Department of Mathematics, Shiv Nadar Institute of Eminence, Deemed to be University, Dadri 201314, Uttar Pradesh, India.}
\email{rb212@@snu.edu.in}

\author[I. Biswas]{Indranil Biswas}
\address{Department of Mathematics, Shiv Nadar Institute of Eminence, Deemed to be University, Dadri 201314, Uttar Pradesh, India.}
\email{indranil.biswas@snu.edu.in, inrdranil29@gmail.com}

\author[P. Kumar]{Pradip Kumar}
\address{Department of Mathematics, Shiv Nadar Institute of Eminence, Deemed to be University, Dadri 201314, Uttar Pradesh, India.}
\email{pradip.kumar@snu.edu.in}
\subjclass[2020]{53A35}

\keywords{Complete maxface, maximal map, maxface with Enneper end.}

\begin{document}
\include{plaquettemacro}
\maketitle
\begin{abstract}
We have proven the existence of new higher-genus maxfaces with Enneper end. These maxfaces are not the 
companions of any existing minimal surfaces, and furthermore, the singularity set is located away from the 
ends. The nature of the singularities is systematically investigated.
\end{abstract}

\section{\textbf{Introduction}}

Analogous to minimal surfaces in $\mathbb{R}^3$, maximal surfaces are immersions with zero mean curvature 
in the Lorentz Minkowski space $\mathbb E_1^3$. These surfaces arise as solutions to the variational 
problem of locally maximizing the area among space-like surfaces. There are similarities between maximal 
surfaces and minimal surfaces, apart from having zero mean curvature, for example, both admit a
Weierstrass Enneper representation. However, striking differences emerge in the global study of these 
surfaces.  While the only complete maximal surfaces are planes, there are many complete minimal surfaces 
apart from planes, such as the Catenoid, Enneper surface, Costa surfaces, etc.

The global existence of maximal surfaces warrants allowing natural singularities. We study
a particular class of maximal surfaces called maxfaces, which were named so by Umehara and Yamada in \cite{UMEHARA2006}. As there are
isolated singularities as well, and that are not allowed on maxfaces.  Maxfaces
 have only non-isolated singularities, and at singularities, limiting tangent vectors 
contain a light-like vector.
The Lorentzian Catenoid is a maxface of genus zero. Imaizumi and Kato \cite{imaizumi2008} classified genus-zero maxfaces. Kumar and 
Mohanty \cite{Saipradip2023} have shown the existence of genus-zero maxfaces with a prescribed singular set and an arbitrary number of complete and simple ends.

For higher genus, Kim and Yang \cite{Kim2006} have proved the existence of genus one maxfaces and 
have also shown the existence of maximal maps (not the maxfaces) for higher genus. These maximal maps and maxfaces have 
two ends --- both of which are catenoid types. Furthermore, Fujimori, Rossman, Umehara, Yang, and Yamada, 
\cite{Fujimori2009}, have constructed a family of complete maxfaces, denoted as $f_k$, $k \,=\, 1,\, 2,\, 
3,\,\cdots$, with two ends. The maxface $f_k$ is of genus $k$ if $k$ is odd and it has genus $\frac{k}{2}$ 
if $k$ is even. All of them have two ends.

In 2016, authors of \cite{Fujimori2016highergenus} constructed maxfaces of any odd genus $g$ with two complete ends (if $g\,=\,1$, the 
ends are embedded) and maxfaces of genus $g\,=\,1$ with three complete embedded ends. 

Here, we focus on higher genus ($g \,\geq\, 2$) maxfaces with Enneper end and prove the existence of higher 
genus maxfaces with one Enneper end. We also analyze the nature of singularities on these surfaces.

Due to the presence of the singular set, we cannot directly apply methods from minimal surfaces to prove the existence of a maxface. Although the Weierstrass-Enneper representation differs only by a sign, the question regarding the existence of maxfaces of arbitrary genus, with a prescribed singular set and a specified nature of singularity, remains inadequately explored. The main challenges can be distilled into two categories: the construction of the maxface and the comprehension of the singularities' nature.

\subsection{Challenges in Constructing Maxfaces in  comparison to Minimal Surfaces in $\mathbb{R}^3$}
For constructing maxfaces of higher genus, the obvious initial approach is to consider companion surfaces. Suppose for a minimal 
surface the Weierstrass data on a Riemann surface $M$ is $\lbrace g,\,dh\rbrace$. Then if the companion exists, the corresponding maxface 
would have the data $\lbrace- i g,\, i dh\rbrace$. However, this approach faces two main challenges:

\textbf{1.} It is possible that the singularity set may extend towards the ends, preventing the creation of a complete maxface. For instance, consider the Weierstrass data for Jorge-Meek's minimal surface, given by:
\[
g(z) \,=\, z^n \ \ \ \text{ and } \ \ \ \omega \,=\, \frac{dz}{(z^{n+1}-1)^2}.
\]
This data results in a complete minimal surface on $\mathbb{C}\cup\{\infty\}$ with punctures at $\{1,\, \zeta, 
\,\zeta^2,\, \cdots ,\, \zeta^{n-1}\}$, where $\zeta\,=\, \exp\left({\frac{2 i \pi}{n}}\right)$. It is 
observed that the companion, whose Gauss map is $g_0\,=\, -i g$, gives a maximal map, but it is not a 
complete maxface because the singular set $\{z\,\, \big\vert\,\,|g_0(z)| \,=\,1\}$ is not compact.  We refer 
to Fact \ref{fact} for the definition of a complete maxface.

\textbf{2.} The second issue concerns the solvability of the period problem in a direct way. For example, consider the Costa surface:
\[
M \,=\, \left\{(z,\,w) \in \mathbb{C}\times\mathbb{C}\cup\{(\infty,\infty)\} \, \big\vert\, \, w^2\,=\,z(z^2-1) \right\}
\setminus \{(0,\,0),\,(\pm 1,\,0)\}
\]
with data $\left\{\frac{a}{w},\,\frac{2a}{z^2-1}dz\right\}$, $a\,\in\,\mathbb{R}^{+}\setminus\{0\}$.

Suppose there exists a maxface for which the companion is the Costa surface. Then its data should be $\{M,\, -\frac{i a}{w},\, \frac{2i 
a}{z^2-1}dz \}$. Let $\tau$ be an one-sheeted loop around $(-1,\,0)$ that does not contain $(1,\,0)$. Then $\int_\tau(\frac{2i 
a}{z^2-1}dz)\,=\,2i a$, $a\,\neq\, 0$. Therefore, for the corresponding maxface, the period problem is not solved. Hence, there does not exist 
any maxface for which the corresponding companion is the Costa surface.

\begin{figure}[h]
    \centering
    \def\svgwidth{0.7\columnwidth}
\begingroup%
  \makeatletter%
  \providecommand\color[2][]{%
    \errmessage{(Inkscape) Color is used for the text in Inkscape, but the package 'color.sty' is not loaded}%
    \renewcommand\color[2][]{}%
  }%
  \providecommand\transparent[1]{%
    \errmessage{(Inkscape) Transparency is used (non-zero) for the text in Inkscape, but the package 'transparent.sty' is not loaded}%
    \renewcommand\transparent[1]{}%
  }%
  \providecommand\rotatebox[2]{#2}%
  \newcommand*\fsize{\dimexpr\f@size pt\relax}%
  \newcommand*\lineheight[1]{\fontsize{\fsize}{#1\fsize}\selectfont}%
  \ifx\svgwidth\undefined%
    \setlength{\unitlength}{418.49999135bp}%
    \ifx\svgscale\undefined%
      \relax%
    \else%
      \setlength{\unitlength}{\unitlength * \real{\svgscale}}%
    \fi%
  \else%
    \setlength{\unitlength}{\svgwidth}%
  \fi%
  \global\let\svgwidth\undefined%
  \global\let\svgscale\undefined%
  \makeatother%
  \begin{picture}(1,0.80107531)%
    \lineheight{1}%
    \setlength\tabcolsep{0pt}%
    \put(0,0){\includegraphics[width=\unitlength,page=1]{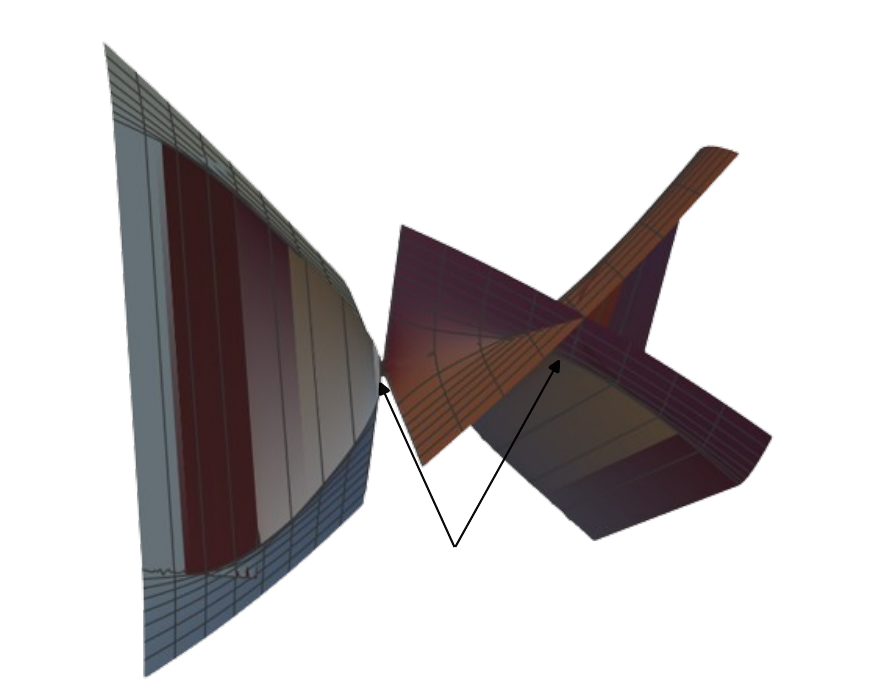}}%
    \put(0.40057419,0.14859454){\color[rgb]{0,0,0}\transparent{0.95815808}\makebox(0,0)[lt]{\lineheight{0.1}\smash{\begin{tabular}[t]{l}\text{These are part of two singular curves.} \\\\\\\\\\\\\\\\\\\\\text{It has at least four swallowtails.}\end{tabular}}}}%
  \end{picture}%
\endgroup%

    \caption{Half of a Lorentzian Chen–Gackstatter surface: A visualization using Mathematica with the Weierstrass data derived from a genus-one zigzag or tweezer. }
    \label{fig:Itroduction}
\end{figure}
To construct a complete maxface with a prescribed singular set, it is common practice to forego the companion approach and address the period problem from the ground up. This involves taking into account the singularities to ensure completeness. In this article, we employ the method of solving the period problem as proposed by Wolf and Weber \cite{weber1998minimal,Weber1998TeichmullerTA}.  Using this method, we prove the existence of higher genus maxfaces with an Enneper end. Notably, these new maxfaces are not companions to existing higher genus minimal surfaces with an Enneper end, as described in \cite{weber1998minimal}. Precisely, we have proven:
\subsection{(Theorem \ref{prop:maxfacefromZigzag})}\label{firstresult} Given a reflexive zigzag of genus $p$, there exists a maxface $\widetilde{X}$ of genus $p$ with one Enneper end defined on a Riemann surface that is `generated' by the zigzag.   This result is along a similar line as in \cite{weber1998minimal}, where authors proved the existence of a reflexive zigzag of any genus and then the existence of a minimal surface $X$ for a reflexive zigzag.
 \subsection{(Theorem \ref{thm:surfaceFromTweezer})}\label{secondresult} Given a reflexive tweezer $T$ of genus $p$, there exists a minimal surface $X_T$ and a maxface $\widetilde{X}_T$ of genus $p$, each having one Enneper end and at most eight symmetries. Furthermore, $X_T$ (respectively, $\widetilde{X}_T$) is not symmetric to the minimal surface $X$ (respectively, $\widetilde{X}$) as discussed above and  in Section \ref{subsection:minimal surface weber & wolf}. Moreover, a reflexive tweezer exists for any genus.

In a complete maxface, which is not a plane, numerous types of singularities appear. For example, in the  Lorentzian Enneper surface, there are four swallowtails, and the rest are cuspidal edges (see 
\cite{UMEHARA2006}). In the context of our surfaces mentioned above, it is natural to ask about the singular set and the types of singularities that might emerge. 

Given that the Gauss map is defined via the Schwartz-Christoffel map and the existence of such a surface is based on the fact that there is a reflexive tweezer or zigzag, the surface data is not explicitly known to us. Consequently, pinpointing the exact nature of the singularity becomes a complex endeavor. In Section 7, we delve into a comprehensive analysis of the possible singularities that might emerge on these surfaces. Specifically, we prove the following:
\subsection{(Theorem \ref{thm:Sing})} Let $X$ be a maxface of genus $p$ defined via a zigzag. Moreover, if this maxface is a front, it possesses $p+1$ connected components of the singular set, each of which is topologically a circle. Each component will have:
    \begin{enumerate}
        \item At least two points where we have either swallowtails, cuspidal butterflies, or the special singular points of type 1.

    \item At least two points where we have either cuspidal cross-caps, cuspidal $S_1^-$, or special singular points of type 2.

    \item Other singularities that are not any of the above are cuspidal edges.
    \end{enumerate}
Furthermore, if the tweezer is of genus $1$, then the maxface from the tweezer and zigzag are the same. For genus $p\geq 2$, if the tweezer has data \((c, t_1, \ldots, t_p)\) with \(t_{-j} = -t_j\), then it possesses at least $p-1$ connected components of the singular set. Moreover, the nature of the singularity in each connected component of singularities is the same as in the case of the zigzag.

Section 2 is preliminary. In Section 3, we recall the construction of minimal surfaces by zigzag, as 
presented in \cite{weber1998minimal}. Using similar methods, in Theorem \ref{prop:maxfacefromZigzag}, 
we construct maxfaces with zigzag. We demonstrate that this maxface is not the companion of the minimal 
surface by zigzag; instead, it is symmetric to a companion of the same minimal surface.

One of our goals is to construct maxfaces distinct from the companions of minimal surfaces by zigzag that are not symmetric to their companions. To achieve this, in Section 4, we introduce a geometric shape known as a `tweezer' (a special ortho-disk in the terminology of \cite{Weber1998TeichmullerTA}). In Theorem \ref{thm:surfaceFromTweezer}, we prove that for any genus $p \geq 2$, if there exists a reflexive tweezer, then there exists a maxface that is neither the companion of the minimal surface derived from the zigzag nor the companion of any symmetric minimal surface derived from the zigzag. Section 6 delves into a discussion regarding the existence of reflexive tweezers. In Section 7, we extensively study the singularities.

\section{\textbf{Preliminaries}}

We recall the Weierstrass-Enneper representation for the minimal surfaces in $\mathbb R^3$ and maximal 
surfaces as well as maxfaces in the Lorentz Minkowski space $\mathbb E_1^3$. Here $\mathbb{E}_1^3$ is the vector space 
$\mathbb{R}^3$ with the bilinear form $dx^2+dy^2-dz^2$.

\subsection{Weierstrass-Enneper representation for minimal surfaces in $\mathbb R^3$}

For an oriented minimal surface $X\,:\, M\,\longrightarrow\,
{\mathbb R}^3$, there is a natural Riemann surface structure on $M$ together with a
meromorphic function $g$ as well as a holomorphic one-form $dh$ on $M$ such that the poles and zeros of $g$ match 
with the zeros of $dh$ with the same order, and
\begin{equation}
X(p) \,=\, \text{Re}\, X(x_0)+ \text{Re} \int^p_{x_0} \left(\frac{1}{2} (g^{-1} - g)  , \frac{ i }{2} ( g^{-1} + g ), 1 \right)dh.
\end{equation}

The triple $\lbrace M,\,g,\,dh \rbrace$ is referred to as the Weierstrass data for the minimal surface $(X,\,
M)$. Furthermore, with such $g$ and $dh$ on a Riemann surface $M$, if the above integral is well defined, then it
is a minimal immersion in ${\mathbb R}^3$.

Now, let us move to the maximal immersions.

\subsection{Weierstrass-Enneper representation for the maximal map}\label{weirstrassdata}
Let $M$ be a Riemann surface, $g$ a meromorphic function on $M$ and $dh$ a holomorphic 1-form on $M$, such
that the following conditions are satisfied:
\begin{enumerate}
\item If $p\,\in\, M$ is a zero or pole of $g$ of order $m$, then $dh$ has a zero at $p$ of order at least $m$,

\item $|g|$ is not identically equal to 1 on $M$,

\item for all closed loops $\gamma$ on $M$,
    \begin{equation}\label{PeriodProblem}
    \int_{\gamma}gdh+\overline{\int_{\gamma}g^{-1}dh}\,=\,0,\ \ \  Re\int_{\gamma}dh\,=\,0.
    \end{equation} 
\end{enumerate}
Then the map $X\,:\, M\, \longrightarrow\, \mathbb E_1^3$ defined by
\begin{equation}\label{maximal_map}
X(p)\,=\,\text{Re}\int_{x_0}^p\left( \frac{1}{2}(g^{-1} + g), \frac{ i }{2}(g^{-1} - g), 1 \right) dh
\end{equation}
is a maximal map with the base point $x_0\,\in\, M$ \cite{Estudillo1992,UMEHARA2006}. Furthermore, any
maximal map can be expressed in this form. The triple $(M,\,g,\, dh)$ constitutes the Weierstrass data
for the maximal map. 

The pullback metric on the Riemann surface $M$ is given by 
$$ds^2\,=\,\frac{1}{4}\left(|g|^{-1}-|g|\right)^2|dh|^2,$$ and the singular locus of the maximal map with
Weierstrass data $(M,\,g,\, dh)$ is the subset $\{p\,\in\, M\,\big\vert\,\, |g(p)|\,=\,1\, \text{ or }\,
dh(p)\,=\,0\}$.

In the context of \textbf{maxfaces}, it is known that
\begin{equation}\label{divisor}(g)_0-(g)_\infty\,=\, (dh)_0.\end{equation}
Therefore, the singular locus for the maxface is $\{p\,\in\, M\,\big\vert\,\, |g(p)|\,=\,1 \}$.

The completeness of a maxface can be determined using the following criterion:
\begin{fact}\label{fact}
A maxface is complete (see \cite[Corollary 4.8]{UMEHARA2006}) if and only if the following three hold:
\begin{enumerate}
\item $M$ is bi-holomorphic to $\overline{M}\setminus\{p_1,\, \cdots,\, p_n\}$, where $\overline{M}$ is a compact Riemann surface\color{black}.
\item $|g|\,\neq\, 1$ at $p_j$, $1\,\leq\, j\,\leq\, n$.
\item The induced metric $ds^2$ is complete at the ends.
\end{enumerate}
\end{fact}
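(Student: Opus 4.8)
The statement is the maxface counterpart of Osserman's characterization of complete minimal surfaces of finite total curvature, so the plan is to prove the two implications separately, keeping the Weierstrass data $(M,g,dh)$ fixed throughout. Recall that a maxface $f$ is declared complete when there is a compactly supported symmetric $2$-tensor $T$ for which $ds^2+T$ is a complete Riemannian metric; this device is forced on us precisely because $ds^2=\tfrac14(|g|^{-1}-|g|)^2|dh|^2$ degenerates along the singular set $\Sigma=\{|g|=1\}$. The central auxiliary object I would use is the companion Riemannian metric
\[
d\sigma^2\,=\,\tfrac14\bigl(|g|^{-1}+|g|\bigr)^2|dh|^2,
\]
which, thanks to the divisor condition \eqref{divisor} (so that the zeros of $dh$ are cancelled by the zeros and poles of $g$), is positive-definite on all of $M$, satisfies $ds^2\le d\sigma^2$ pointwise, and is conformal to the flat metric $|dh|^2$ away from the zeros of $dh$.

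For the implication ($\Leftarrow$), assume (1)--(3). Condition (2) gives a punctured neighbourhood of each end on which $|g|\neq1$, so $\Sigma$ is disjoint from the ends and hence contained in a compact set $K\subset M$. On $M\setminus\Sigma$ the tensor $ds^2$ is already a genuine Riemannian metric, so I would choose $T$ supported in a compact neighbourhood of $K$ that fills in the degeneracy and makes $ds^2+T$ Riemannian everywhere. Because $T$ has compact support, whether $ds^2+T$ is complete is entirely a question at the ends, where $ds^2+T=ds^2$; this is exactly condition (3). Hence $f$ is complete.

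The implication ($\Rightarrow$) carries the real content. Starting from a compactly supported $T$ with $ds^2+T$ complete, a divergent path eventually leaves $\operatorname{supp}T$ and thereafter has $d\sigma^2\ge ds^2=ds^2+T$; comparing lengths shows that $d\sigma^2$ is itself a complete Riemannian metric on $M$. The next and hardest step is to upgrade this to finite total curvature: writing $d\sigma^2=\rho^2|dh|^2$ with $\rho=\tfrac12(|g|^{-1}+|g|)$ and using $K_{d\sigma^2}=-\rho^{-2}\Delta_{|dh|}\log\rho$, the curvature integral is governed by the meromorphic Lorentzian Gauss map $g$, and an Osserman--Fujimoto type estimate bounds $\int_M|K_{d\sigma^2}|\,dA$ in terms of the covering data of $g$. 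With finite total curvature in hand, Huber's theorem yields that $M$ is conformally $\overline{M}\setminus\{p_1,\dots,p_n\}$ for a compact Riemann surface $\overline{M}$, which is (1), and an Osserman-type removable-singularity argument extends $g$ and $dh$ meromorphically across each puncture $p_j$.

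Finally I would read off (2) and (3) from the local behaviour at each end. At a puncture $p_j$ the completeness of $d\sigma^2$ forces $dh$ to have a pole; examining $ds^2$ in a local coordinate then shows that an end with $|g|\to1$ would be accumulated by the singular set $\Sigma$ and could not serve as a complete regular end, so completeness yields $|g(p_j)|\neq1$, which is (2), while the pole of $dh$ produces the metric blow-up recorded in (3). The main obstacle throughout is the finite-total-curvature step in the forward direction: unlike the Euclidean case it rests on the spacelike (Calabi--Bernstein) rigidity of the regular part together with the meromorphicity of $g$, and it is here that the maxface analogue of Osserman's lemma, as developed in \cite{UMEHARA2006}, must be invoked.
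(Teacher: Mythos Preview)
The paper does not prove this statement at all: it is recorded as a \emph{Fact} and attributed to \cite[Corollary 4.8]{UMEHARA2006}, with no argument given. So there is no ``paper's own proof'' to compare your proposal against; the authors simply import the result from Umehara--Yamada and use it as a black box (for instance, in the proof of Theorem~\ref{thm:surfaceFromTweezer} to verify that the constructed maxface is complete).

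Your sketch is a reasonable outline of how the proof in \cite{UMEHARA2006} actually goes. The backward implication is essentially as you describe: condition (2) forces the singular set $\{|g|=1\}$ to avoid punctured neighbourhoods of the ends, hence to be compact, and one patches with a compactly supported $T$. For the forward implication, your route via the auxiliary ``lift metric'' $d\sigma^2=\tfrac14(|g|^{-1}+|g|)^2|dh|^2$, its completeness, finite total curvature, Huber's theorem, and then meromorphic extension of $g$ and $dh$ across the punctures is exactly the architecture Umehara--Yamada use. The one place where your write-up is genuinely thin is the finite-total-curvature step: you gesture at an ``Osserman--Fujimoto type estimate'', but in the maxface setting the actual argument passes through the observation that $d\sigma^2$ is (up to a conformal factor) the pullback by $g$ of the spherical metric, so that the total absolute curvature of $d\sigma^2$ equals $4\pi\deg g$, and the point is to show $\deg g<\infty$. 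That is where the compact support of $T$ is used in an essential way (it forces the singular set to be compact, hence $|g|$ is bounded away from $1$ near the ends, from which one extracts the needed growth control). If you intend this as a self-contained proof rather than a pointer to \cite{UMEHARA2006}, that step needs to be made explicit.
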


\section{\textbf{Maxface with Enneper ends from a symmetric zigzag}}\label{section:Minimal 
surface and maxface with enneper end}

This section revisits the construction in \cite{weber1998minimal}
of a minimal surface using the zigzag method. Subsequently, 
we will determine the maxface that corresponds to this minimal surface as its companion. Furthermore, we will 
construct a maxface using the same zigzag approach and establish its relationship with the companion maxface.  

\subsection{Minimal surfaces by Weber and Wolf}\label{subsection:minimal surface weber & wolf}

\begin{definition}[{\cite{weber1998minimal}}]\label{definition:zigzag}
A zigzag $Z$ of genus $p$ is an open and properly embedded
arc in $\C$ composed of alternating horizontal and vertical sub-arcs with angles of $\frac{\pi}{2}, \frac{3\pi}{2},\frac{\pi}{2},\ldots,\frac{3\pi}{2},\frac{\pi}{2} $ between consecutive sides, and having
$2p + 1$ vertices (there are $2p + 2$ sides, including an initial infinite vertical side and a
terminal infinite horizontal side). A symmetric zigzag of genus $p$ is a zigzag
of genus $p$ which is symmetric about the line ${y \,=\, x}$. 
\end{definition}
\begin{figure}[h]
    \centering
    \includegraphics[scale=0.3]{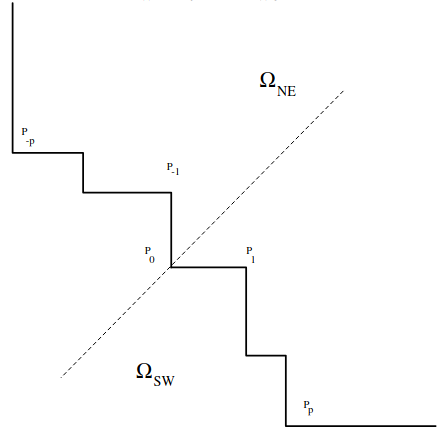}
    \caption{Zigzag of genus $p$ (Picture Courtesy: Weber and Wolf \cite{weber1998minimal})}
    \label{fig:zigzag}
\end{figure}
A symmetric zigzag of genus $p$ divides $\C$ into two regions, one of which we call by the name $\Omega_{NE}$
and the other by $\Omega_{SW}$ (see Figure \ref{fig:zigzag}).

\begin{definition}\label{definition:reflexive zigzag}
A symmetric zigzag $Z$ is called reflexive if there is a
conformal map $\phi\,:\, \Omega_{NE}(Z)\,\longrightarrow\, \Omega_{SW}(Z)$ which takes vertices to vertices.
\end{definition}

\subsubsection{Construction of minimal surface}

Let $Z$ be the genus $p$ reflexive zigzag separating $\C$ into two regions namely $\Omega_{NE}$ and $\Omega_{SW}$. 
We denote the vertices of $\Omega_{NE}$ by $\lbrace P_{j}\rbrace_{j=-p}^{p},\,P_{\infty}\,=\,\infty$. The vertices 
of $\Omega_{SW}$ are labeled in the reverse order: $Q_j\,=\,P_{-j}$ for $j\,\in\,\lbrace 
-p,\,-(p-1),\,\ldots,\,p\rbrace$ and $Q_{\infty}\,=\,\infty$. By doubling these two regions, we obtain two one-punctured spheres, denoted as $S_{NE}$ and $S_{SW}$, each with $2p+1$ marked points ${P_j}$ and ${Q_j}$, respectively, as well as a puncture at $P_{\infty}$ and $Q_{\infty}$ respectively.  
Further, we take the hyperelliptic cover $\mathcal{R}_{NE}$ (respectively, $\mathcal{R}_{SW}$)
of $S_{NE}$ (respectively, $S_{SW}$) branched at $\{P_j\}$ and (respectively, $\{Q_j\}$).
respectively. Let
\begin{equation}\label{epi}
\pi_{NE}\,:\, \mathcal{R}_{NE}\, \longrightarrow\, S_{NE}\ \ \,\text{ and }\ \ \,
\pi_{SW}\,:\, \mathcal{R}_{SW}\, \longrightarrow\, S_{SW}
\end{equation}
be the degree two branched covering maps.
Since the zigzag $Z$ is reflexive, there is a conformal map 
$\phi\,:\,\Omega_{NE}\,\longrightarrow\,\Omega_{SW}$ taking the vertices to vertices. This $\phi$ induces a 
conformal map 
$$\widetilde{\phi}\,:\,\mathcal{R}_{NE}\,\longrightarrow\,\mathcal{R}_{SW}$$ such that
$\widetilde{\phi}(P_j)\,=\,Q_j$ for all $j$.

The flat metric $|dz|$ on $\Omega_{NE}$ extends to a singular flat metric on $S_{NE}$ with cone angles at 
$P_j$. Its pullback through $\pi_{NE}$ (see \eqref{epi}) is a singular flat metric on $\mathcal{R}_{NE}$. 
The corresponding nonvanishing one form (see \cite{weber1998minimal}) is denoted by
$\omega_{NE}$. Similarly, denote by $\omega_{SW}$ the nonvanishing one form induced on $\mathcal{R}_{SW}$
by the flat metric $|dz|$ on $\Omega_{SW}$.

We define two flat forms $\alpha\,=\,\exp({\frac{-i \pi }{4}})\omega_{NE}$ and $\beta\,=\,
\exp({\frac{-i \pi }{4}})\widetilde{\phi}^*\omega_{SW}$ on $\mathcal{R}_{NE}$.
We choose $c$ and define $dh\,=\,c d\pi_{NE}$ such that $\alpha\beta\,=\,dh^2$ (see \cite{weber1998minimal}
for the details).

Finally, we define $g\,=\,\frac{\alpha}{dh}$ and
consider the formal Weierstrass data as $\lbrace g,\,dh\rbrace$. Weber and Wolf have proven in
\cite{weber1998minimal} that this pair gives the minimal surface by showing that
$$\int_{B_j} gdh\,=\,\int_{B_j}\alpha\,=\,\overline{\int_{B_j}g^{-1}dh}\,=\,\overline{\int_{B_j}\beta}$$
for all $\lbrace B_{\pm j}\rbrace_{j=1}^{p}$ (defined in the proof of \cite[Theorem 3.3]{weber1998minimal}). We
will explain this technique more explicitly in the next section, where we will modify it to generate
a maximal surface.

\subsection{Maximal surface generated from zigzag}\label{Subsec:Maximalsurface tildeX}

Using the terminology in Section \ref{subsection:minimal surface weber & wolf},
define two nonvanishing holomorphic one forms $\widetilde{\alpha}\,=\,e^{\frac{i \pi }{4}}\omega_{NE}$
and $\widetilde{\beta}\,=\,e^{\frac{i \pi }{4}}\widetilde{\phi}^*\omega_{SW}$. By definition, $B_j$
encloses exactly around the line segment $P_{j+1} P_{j}$ and $B_{-j}$ encloses exactly around the line segment $P_{-j-1}P_{-j}$. Since $\omega_{NE},\omega_{SW}$ are flat forms,
$$\int_{B_j}\widetilde{\alpha}\,=\, e^{\frac{i \pi }{4}}\int_{B_j}\omega_{NE}
\,=\,2e^{\frac{i \pi }{4}}\int_{P_{j+1}}^{P_{j}} dz\,=\,2e^{\frac{i \pi }{4}}(P_{j}-P_{j+1}).$$
Similarly, for $\widetilde\beta$,
$$
\int_{B_j}\widetilde{\beta}\,=\,e^{\frac{i \pi }{4}}\int_{\widetilde{\phi}(B_j)}\omega_{SW}
\,=2\color{black}\,e^{\frac{i \pi }{4}}\int_{Q_{j+1}}^{Q_{j}} dz
$$
$$
=\,2e^{\frac{i \pi }{4}}(Q_{j}-Q_{j+1})\,=\,2e^{\frac{i \pi }{4}}(P_{-j}-P_{-j-1}).
$$

By symmetry of zigzag, we have $-\overline{e^{\frac{i \pi }{4}}P_{-j}}
\,=\, e^{\frac{i \pi }{4}}P_{j}$.
Therefore, $-\overline{\int_{B_j}\widetilde{\beta}}\,=\,\int_{B_j}\widetilde{\alpha}$ for all $B_j$.

We define $\widetilde{g}\,=\,\frac{\widetilde{\alpha}}{\widetilde{dh}}$, where
$\widetilde{dh}\,=\,\widetilde{c}d\pi_{NE}$ for some $\widetilde{c}$, 
such that $\widetilde{\alpha}\widetilde{\beta}\,=\,\widetilde{dh}^2$. Therefore
$\widetilde{g}\widetilde{dh}\,=\,\widetilde{\alpha}$ and $\widetilde{g}^{-1}\widetilde{dh}
\,=\,\widetilde{\beta}$.  From the definition of $\widetilde g$ and $\widetilde dh$ the divisor condition is readily satisfied. 
Since $\widetilde{dh}$ is an exact form, we deduce that $\int_{B_j}\widetilde{dh}
\,=\,0$. Further from discussions in the last paragraph it is deduced that
$$\int_{B_j}\widetilde{g}\widetilde{dh}\,=\,-\overline{\int_{B_j}\widetilde{g}^{-1}\widetilde{dh}}$$
for all $B_j$. Further, since $\widetilde g$ has zero at $P_{\infty}$, the singularity set is compact. Thus we conclude the following:

\begin{theorem}\label{prop:maxfacefromZigzag}
The triple $\lbrace\mathcal{R}_{NE}\setminus\lbrace P_{\infty}\rbrace,\,\widetilde{g},\,\widetilde{dh}\rbrace$ defines a maxface.
This surface, we denote by $\widetilde{X}$, has at most eight symmetries. Moreover, the following
hold:
\begin{enumerate}
\item The minimal surface $X$ is not the companion of  $\widetilde{X}$.

\item The companion of the minimal surface $X$ exists; denote it by $X_C$. Then $X_C$ and
$\widetilde{X}$ are symmetric.
\end{enumerate}
\end{theorem}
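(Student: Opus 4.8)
The plan is to verify the three defining conditions of a maxface from Section \ref{weirstrassdata} for the triple $\lbrace\mathcal{R}_{NE}\setminus\lbrace P_{\infty}\rbrace,\,\widetilde{g},\,\widetilde{dh}\rbrace$, using the computations already assembled in Section \ref{Subsec:Maximalsurface tildeX}, and then to establish the two relational claims (1) and (2) together with the symmetry count. First I would collect the verification of the Weierstrass conditions: the divisor condition \eqref{divisor} holds by construction of $\widetilde{g}=\widetilde{\alpha}/\widetilde{dh}$ with $\widetilde{dh}=\widetilde{c}\,d\pi_{NE}$ chosen so that $\widetilde{\alpha}\widetilde{\beta}=\widetilde{dh}^2$; the condition $|\widetilde{g}|\not\equiv 1$ follows since $\widetilde{g}$ has a genuine zero at $P_\infty$ (hence is nonconstant and not unimodular); and the period conditions \eqref{PeriodProblem} on the cycles $B_{\pm j}$, $j=1,\ldots,p$, are exactly the displayed identities $\int_{B_j}\widetilde{g}\widetilde{dh}=-\overline{\int_{B_j}\widetilde{g}^{-1}\widetilde{dh}}$ and $\mathrm{Re}\int_{B_j}\widetilde{dh}=0$ (the latter because $\widetilde{dh}=\widetilde{c}\,d\pi_{NE}$ is exact on the relevant chains, or more precisely has vanishing periods on the $B_j$). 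I would note that the $A$-cycle periods are handled by the hyperelliptic involution / reflection symmetry in the same way as in \cite{weber1998minimal}, so the full homology basis is covered. Finally, completeness: the underlying compact surface is $\mathcal{R}_{NE}$, the only puncture is $P_\infty$ where $|\widetilde{g}|=0\neq 1$, and the metric $ds^2=\tfrac14(|\widetilde{g}|^{-1}-|\widetilde{g}|)^2|\widetilde{dh}|^2$ blows up at $P_\infty$ because $\widetilde{g}\to 0$ there, giving a complete Enneper-type end; this invokes Fact \ref{fact}.

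Next, for the symmetry count, I would argue that the symmetries of $\widetilde{X}$ come from the conformal automorphisms of $\mathcal{R}_{NE}$ that preserve the pair $(\widetilde{g},\widetilde{dh})$ up to the isometry group of $\mathbb{E}_1^3$: these are generated by the hyperelliptic involution of $\pi_{NE}$, the reflection of the zigzag across $y=x$ (which is what makes $Z$ symmetric), and the reflexive map $\phi$ exchanging $\Omega_{NE}$ and $\Omega_{SW}$; together with the obvious antiholomorphic reflections these generate a group of order at most eight, exactly paralleling the count for the minimal surface $X$ in \cite{weber1998minimal}. I would present this by exhibiting the generators explicitly and checking each acts on $(\widetilde{g},\widetilde{dh})$ by a factor realizing an ambient isometry, then bounding the group order.

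For claim (1), the point is to compare $\widetilde{X}$ with the hypothetical companion of $X$: the companion of a minimal surface with data $\{g,dh\}$ has data $\{-ig,\,i\,dh\}$, so its Gauss-map factor is $-i\alpha/dh$ and its height differential $i\,dh$. The data of $\widetilde{X}$ is built from $\widetilde{\alpha}=e^{i\pi/4}\omega_{NE}$ while the data of $X$ is built from $\alpha=e^{-i\pi/4}\omega_{NE}$; tracking the phase factors, $\widetilde{g}=\widetilde{\alpha}/\widetilde{dh}$ differs from $-ig$ by a factor that is not a unimodular constant compatible with an ambient isometry, so $X$ is not the companion of $\widetilde{X}$. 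For claim (2), I would instead show that the companion $X_C$ of $X$, which exists because $X$'s periods satisfy $\int_{B_j}\alpha=\overline{\int_{B_j}\beta}$ so that the companion periods $\int_{B_j}(-i\alpha)+\overline{\int_{B_j}(i\beta^{-1}\cdots)}$ close up (one checks the maxface period conditions for $\{-ig,i\,dh\}$ directly from Weber--Wolf's minimal-surface identities), has Weierstrass data that agrees with that of $\widetilde{X}$ after applying a fixed conformal automorphism of $\mathcal{R}_{NE}$ (coming from the reflexive symmetry $\phi$) composed with a fixed ambient isometry; hence $X_C$ and $\widetilde{X}$ are symmetric. Concretely I expect the relation to be that replacing $\omega_{NE}$-data by $\widetilde\phi^*\omega_{SW}$-data swaps $\widetilde\alpha\leftrightarrow\widetilde\beta$ and conjugates the picture, which is precisely the companion construction up to symmetry.

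The main obstacle will be the bookkeeping in claims (1) and (2): one must carefully chase the $e^{\pm i\pi/4}$ phase factors and the $-i$ twists through the definitions of $g,\widetilde g, dh,\widetilde{dh}$ and through the companion construction, and distinguish "equal up to an ambient isometry and a conformal reparametrization" (which is what "symmetric" means here) from "literally the same data." In particular, showing the negative statement in (1)—that no ambient isometry together with no conformal automorphism of $\mathcal{R}_{NE}$ can match the data of $X$'s companion to that of $\widetilde X$—requires pinning down exactly which reparametrizations are available (essentially the order-$\leq 8$ symmetry group found above) and checking the phase mismatch survives all of them; this is where I would spend the most care. The period verifications and the completeness/Enneper-end analysis, by contrast, are essentially transcriptions of the identities already displayed in Section \ref{Subsec:Maximalsurface tildeX} together with Fact \ref{fact}.
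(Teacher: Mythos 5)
Your overall plan matches the paper's: the maxface conditions are read off from the period identities already displayed in Section \ref{Subsec:Maximalsurface tildeX} together with the divisor condition and Fact \ref{fact}, and claims (1) and (2) are settled by comparing Weierstrass data. The one place where your anticipated mechanism diverges from (and overcomplicates) the paper's argument is in claims (1)--(2). The paper's key observation is the elementary identity $\widetilde{\alpha}/\alpha=\widetilde{\beta}/\beta=i$ (since $e^{i\pi/4}=i\,e^{-i\pi/4}$), which forces $\widetilde{dh}^{\,2}=-dh^2$, hence $\widetilde{dh}=i\,dh$ and therefore $\widetilde{g}=\widetilde{\alpha}/\widetilde{dh}=i\alpha/(i\,dh)=g$. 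From this, (1) is immediate: the companion of $X$ must carry Gauss map $-ig$, whereas $\widetilde{X}$ carries Gauss map $g$, and these differ as meromorphic functions; and (2) reduces to writing out the two integrands for the data $(g,i\,dh)$ and $(-ig,i\,dh)$ and checking $\widetilde{X}=\Psi\circ X_C$ pointwise for the single ambient rotation $\Psi(x,y,z)=(y,-x,z)$ --- no conformal reparametrization of $\mathcal{R}_{NE}$ is needed at all. Your proposed route through the reflexive map $\phi$ and a swap $\widetilde{\alpha}\leftrightarrow\widetilde{\beta}$ is not the relation that actually holds and would send you chasing the wrong identification; likewise your worry in (1) about quotienting by the full symmetry group is unnecessary once one sees that the paper's claim (1) is the literal statement that the data $(g,i\,dh)$ is not the companion data $(-ig,i\,dh)$, with the ``up to isometry'' content isolated entirely in claim (2). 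The rest of your outline (divisor condition, exactness of $\widetilde{dh}$, compactness of the singular set from $\widetilde{g}(P_\infty)=0$, completeness of the Enneper end, and the order-at-most-eight symmetry count inherited from the shared Riemann surface) is exactly what the paper does.
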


\begin{proof}
It is easy to see that $\frac{\widetilde{\alpha}}{\alpha} \,=\, \frac{\widetilde{\beta}}{\beta} \,=\,
 i $. Therefore, $\frac{\widetilde{\alpha}\widetilde{\beta}}{\alpha\beta} \,=\, -1$. Thus,
    $$\left\{\frac{\widetilde{dh}^2}{dh^2} \,=\, -1\right\} \,\,\implies\,\, \{\widetilde{dh} \,=\,  i  dh\color{black}\}.$$
Therefore, $\widetilde{g} \,=\, \frac{ i \alpha}{ i  dh}
\,=\, \frac{\alpha}{dh} \,=\, g$. Thus, the corresponding maximal surface has data $(g,\,  i  dh)$.
The minimal surface and the maximal surface share the same underlying Riemann surface, and they have
at most eight conformal and anticonformal isometries.

Companion of $X$, which is denoted by $X_C$, is given by the Weierstrass data as $g_1 \,=\, -
 i  g$ and $dh_1 \,=\,  i  dh$, if the 
period condition holds.  We verify it here. Since $dh$ is exact, we have $\int_\gamma dh_1 \,=\, 0$ for all 
$\gamma \,\in\, \lbrace B_{\pm j} \rbrace_{j=1}^{p}$. As $\int_\gamma gdh \,=\, \overline{\int_\gamma g^{-1}dh}$, 
$$\int_{\gamma}g_1dh_1 \,= \,\int_\gamma gdh \,=\, \overline{\int_\gamma g^{-1}dh} \,=\, 
-\overline{\int_{\gamma}g_1^{-1}dh_1}.$$ Thus, $\lbrace g_1,\, dh_1 \rbrace$ is a Weierstrass 
data on $\mathcal{R}_{NE}$ for the maximal surface.

We have 
$$\widetilde{X}(p) \,=\, Re\int_{0}^p((g+\frac{1}{g})\frac{ i dh}{2},\,\frac{ i }{2}(g-
\frac{1}{g}){ i dh},\, i dh),$$
$$X_C(p) \,=\, Re\int_{0}^p((g-\frac{1}{g})\frac{dh}{2},\,\frac{ i }{2}(g+\frac{1}{g})dh,\, i dh).$$

It is straightforward to check that $\widetilde{X}(p) \,= \,\Psi(X_C(p))$ where $\Psi\,:\,
\mathbb{E}^3_1\,\longrightarrow\, \mathbb{E}^3_1$ is the map defined by $(x,\,y,\,z)
\,\mapsto\color{black}\, (y,\,-x,\,z)$. Therefore, these two maximal surfaces are symmetric.
\end{proof}

Similarly, one may attempt to construct minimal and maximal maps by zigzags symmetric about the line $y\,=\,-x$, 
but again, the surfaces turn out to be the same modulo symmetry.

\section{\textbf{Tweezers  and corresponding zigzags of genus $p$}}

In this section, we will first revisit the concept of an ``ortho-disk" as described in \cite{Weber1998TeichmullerTA}. 
We will then focus on a specific class of ortho-disks, which we refer to as ``tweezers". Although tweezers were 
inspired by ``zigzags", they differ significantly in many aspects. We will explore the 
relationships between the Riemann surface associated with tweezers and $\mathcal{R}_{NE}$ associated to
zigzags.

\subsection{Conformal polygon and ortho-disk \cite{Weber1998TeichmullerTA}}

On the upper half-plane, consider $n \,\geq\, 3$ marked points $\{t_j\}_{j=1}^n$ lying on the real 
line. The point $t_\infty = \infty$ is also treated as one of these marked points. The 
upper half-plane equipped with these marked points is
referred to as a conformal polygon, while the marked points are referred to as its vertices.
Two conformal polygons are called equivalent under conformal mapping if there exists a biholomorphism
of the upper half-plane that preserves the set of vertices while fixing the point $\infty$.

Let $a_j$, $j\, \in\, \{1,\, \cdots,\, n\}\cup\lbrace\color{black} \infty\rbrace\color{black}$, denote a set of odd integers such that
\begin{equation}\label{eqn:weight}
a_\infty \,\,=\,\, -4 - \sum_{j} a_j.
\end{equation}
A vertex $t_j$ is called ``finite'' if $a_j \,>\,-2$; otherwise, it is classified as ``infinite''.
According to \ref{eqn:weight}, there is at least one finite vertex, which may be coincide with $t_\infty$.
We have the corresponding Schwarz--Christoffel map
\[
F(z) \,:=\, \int_{ i }^z (t - t_1)^{\frac{a_1}{2}} \ldots (t - t_n)^{\frac{a_n}{2}} dt
\]
defined on the complement of the infinite vertices in the upper half-plane $\mathbb{H} \cup \mathbb{R}$.

Upper half plane, without the infinite vertices, equipped with the
pullback, by $F$, of the flat metric on $\mathbb{C}$ is called an ortho-disk.

The integer $a_j$ corresponds to cone angle of $\frac{a_j + 2}{2}\pi$ at $t_j$. Negative angles bear 
significance, as a vertex with a negative angle $-\theta$ resides at infinity and represents the 
intersection of two lines. These lines also intersect at a finite point, forming a positive angle
$+\theta$ at that intersection.

An ortho-disk is called symmetric if it has a reflectional symmetry which fixes two vertices.
Two ortho-disks that share the same underlying conformal polygon, but
having possibly different exponents, are called \textbf{conformal ortho-disks}. Consider two ortho-disks,
$X_1$ and $X_2$, each 
with distinct vertex data. These ortho-disks are termed \textbf{conjugate} if there exists a straight line $l 
\,\subset\, \mathbb{C}$ such that the corresponding periods are symmetric with respect to $l$.
Ortho-disks $X_1$  and $X_2$ are called \textbf{reflexive} if they are both conformal and conjugate.

Below, we will discuss a particular type of ortho-disk, which we call a tweezer. 

\subsection{Symmetric tweezers of genus $p$}\label{definition:tweezer}

A tweezer of genus $p$, with $p\,\geq\, 2$, is an open arc in $\C$ consisting of $2p+1$ vertices $\lbrace 
P_j\rbrace_{j=-p}^{p}$ and $2p+2$ edges such that
\begin{enumerate}
\item the interior angle of the region that is left when we go from $P_{p}$ to $P_{-p}$ alternates
between $\frac{3\pi}{2},\frac{\pi}{2}$ except at $P_{\pm 1},\,P_0$,

\item the interior angle at $P_{\pm 1}$ is always $\frac{\pi}{2}$, and

\item the interior angle at $P_0$ is $\frac{\pi}{2}$ when $p$ is even and it is $\frac{3\pi}{2}$ when $p$ is odd.
\end{enumerate}
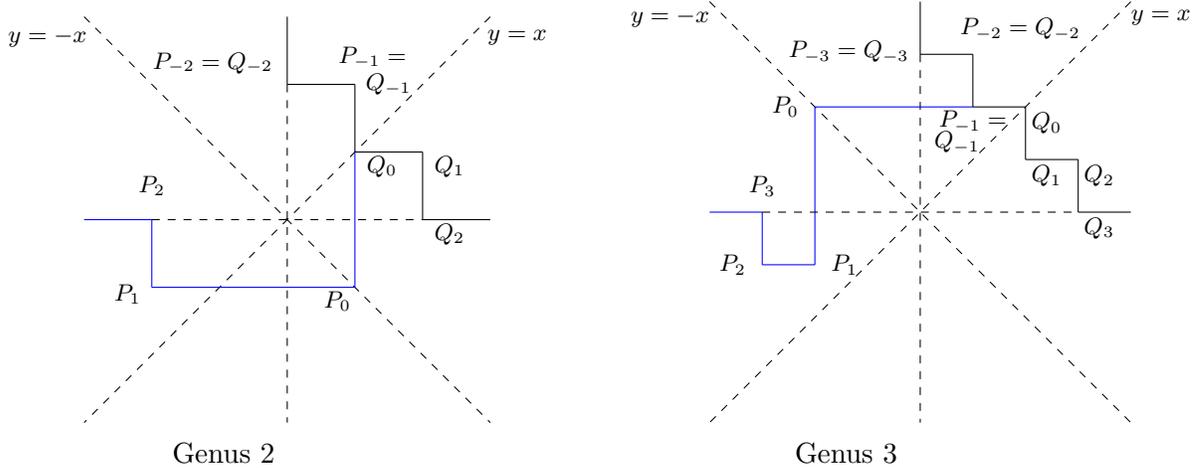
\begin{figure}[ht]
\begin{subfigure}{0.4\linewidth}
    \begin{tikzpicture}[scale=0.9]
   \draw[-,dashed] (3,3) -- (-3,-3);
   \draw (2.8,2.7)node[right] {\scriptsize$y=x$} ;
   \draw[-,dashed] (-3,3) -- (3,-3);
   \draw (-2.8,2.7)node[left] {\scriptsize$y=-x$} ;
   \draw[-] (3,0) -- (2,0);
   \draw (2.4,0.1)node[below] {\scriptsize$Q_{2}$};
   \draw[-] (2,0) -- (2,1);
   \draw (2.4,1.1)node[below] {\scriptsize$Q_{1}$};
   \draw[-] (2,1) -- (1,1);
   \draw (1.4,1.1)node[below] {\scriptsize$Q_{0}$};
   \draw[-] (1,1) -- (1,2);
   \draw (0.6,2.4)node[right] {\scriptsize$P_{-1}=$};
   \draw (1,2)node[right] {\scriptsize$Q_{-1}$};
   \draw[-] (1,2) -- (0,2);
   
   \draw[-] (0,2) -- (0,3);
   \draw (-1.1,2)node[above] {\scriptsize\scriptsize$P_{-2}=Q_{-2}$};
   \draw[-,blue] (1,1) -- (1,-1);
   \draw (1.1,-1.2)node[left]{\scriptsize $P_0$};
   \draw[-,blue] (1,-1) -- (-2,-1);
   \draw (-2,-1.1)node[left]{\scriptsize $P_1$};
   \draw[-,blue] (-2,-1) -- (-2,0);
   \draw (-2,0.2)node[above]{\scriptsize $P_2$};
   \draw[-,blue] (-2,0) -- (-3,0);
   \draw[-,dashed] (0,2) -- (0,-3);
   \draw[-,dashed] (-2,0) -- (2,0);
   
   \end{tikzpicture}
   \subcaption*{Genus 2}
\end{subfigure}
\hspace{2cm}
   \begin{subfigure}{0.4\linewidth}
    \begin{tikzpicture}[scale=0.7]
   \draw[-,dashed] (4,4) -- (-4,-4);
   \draw (3.8,3.7)node[right] {\scriptsize$y=x$} ;
   \draw[-,dashed] (-4,4) -- (4,-4);
   \draw (-3.8,3.7)node[left] {\scriptsize$y=-x$} ;
   \draw[-] (4,0) -- (3,0);
   \draw (3.4,0.1)node[below] {\scriptsize$Q_{3}$};
   \draw[-] (3,0) -- (3,1);
   \draw (3.4,1.1)node[below] {\scriptsize$Q_{2}^{}$};
   \draw[-] (3,1) -- (2,1);
   \draw (2.4,1.1)node[below] {\scriptsize$Q_{1}^{}$};
   \draw[-] (2,1) -- (2,2);
   \draw (2.4,2.1)node[below] {\scriptsize$Q_{0}^{}$};
   \draw[-] (2,2) -- (1,2);
   \draw (1,1.3)node[above] {\scriptsize$P_{-1}^{}=$};
   \draw (0.7,0.9)node[above]{\scriptsize$Q_{-1}^{}$} ;
   \draw[-] (1,2) -- (1,3);
   \draw (1.9,3.05)node[above] {\scriptsize\scriptsize$P_{-2}^{}=Q_{-2}^{}$};
   \draw[-] (1,3) -- (0,3);
   \draw (-0.01,3.05)node[left] {\scriptsize\scriptsize$P_{-3}^{}=Q_{-3}^{}$};
   \draw[-] (0,3) -- (0,4);
   \draw[-,dashed] (0,3) -- (0,-4);
   \draw[-,blue] (1,2) -- (-2,2);
   \draw (-2.1,2)node[left] {\scriptsize$P_0^{}$};
   \draw[-,blue] (-2,2) -- (-2,-1);
   \draw (-1.9,-1)node[right] {\scriptsize$P_1^{}$};
   \draw [-,blue] (-2,-1) -- (-3,-1);
   \draw (-3.1,-1)node[left] {\scriptsize$P_2^{}$};
   \draw [-,blue] (-3,-1) -- (-3,0);
   \draw (-3,0.1)node[above] {\scriptsize$P_3^{}$};
   \draw [-,blue] (-3,0) -- (-4,0);
   \draw[-,dashed] (-3,0) -- (3,0);
   \end{tikzpicture}
   \subcaption*{Genus 3}
\end{subfigure}
\caption{Symmetric tweezers and their corresponding zigzags of genus two and genus three}\label{figure 3:different tweezers g2 g3}
\end{figure}
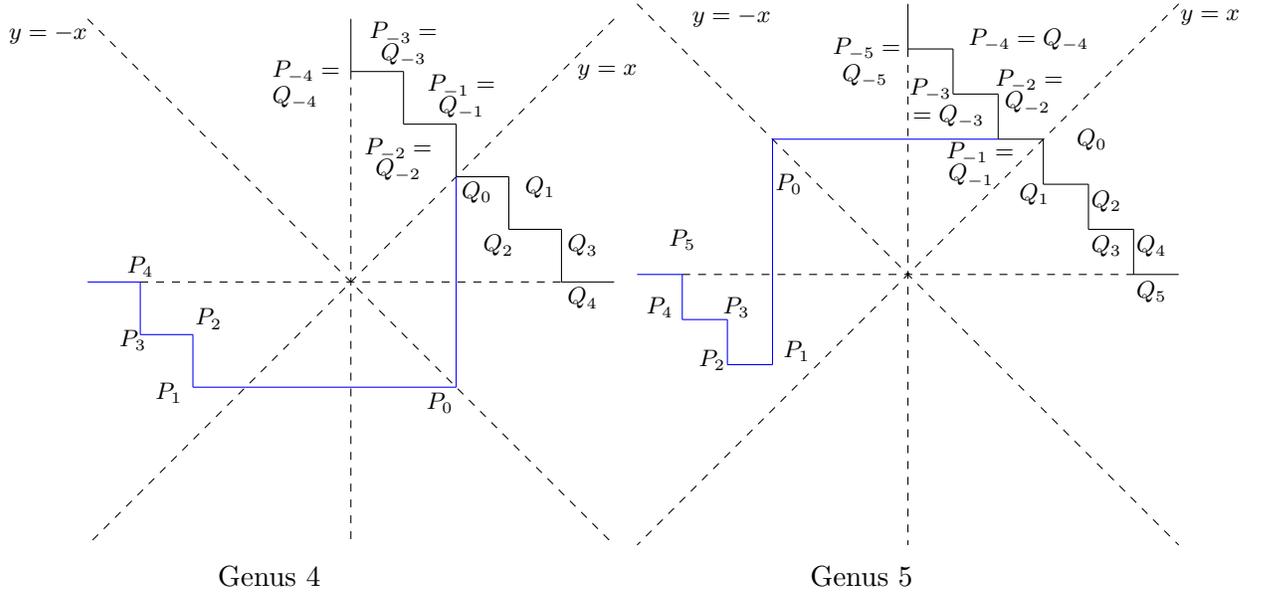
\begin{figure}[ht]
\begin{subfigure}{0.48\linewidth}
\centering
    \begin{tikzpicture}[scale=0.7]
   \draw[-,dashed] (5,5) -- (-5,-5);
   \draw (4.1,4)node[right] {\scriptsize$y=x$} ;
   \draw[-,dashed] (-5,5) -- (5,-5);
   \draw (-4.8,4.7)node[left] {\scriptsize$y=-x$} ;
   \draw[-] (5,0) -- (4,0);
   \draw (4.4,0.1)node[below] {\scriptsize$Q_{4}^{}$};
   \draw[-] (4,0) -- (4,1);
   \draw (4.4,1.1)node[below] {\scriptsize$Q_{3}^{}$};
   \draw[-] (4,1) -- (3,1);
   \draw (2.8,1.1)node[below] {\scriptsize$Q_{2}^{}$};
   \draw[-] (3,1) -- (3,2);
   \draw (3.1,1.8)node[right] {\scriptsize$Q_{1}^{}$};
   \draw[-] (3,2) -- (2,2);
   \draw (1.9,1.7)node[right] {\scriptsize$Q_{0}^{}$};
   
   \draw[-] (2,2) -- (2,3);
   \draw (2.1,3.3)node[above] {\scriptsize\scriptsize$P_{-1}^{}=$};
   \draw (2.1,2.9)node[above] {\scriptsize\scriptsize$Q_{-1}^{}$};
   \draw[-] (2,3) -- (1,3);
   \draw (0.9,2.1)node[above] {\scriptsize\scriptsize$P_{-2}^{}=$};
   \draw (0.9,1.7)node[above] {\scriptsize\scriptsize$Q_{-2}^{}$};
   \draw[-] (1,3) -- (1,4);
   \draw (1,4.3)node[above] {\scriptsize\scriptsize$P_{-3}^{}=$};
   \draw (1,3.9)node[above] {\scriptsize\scriptsize$Q_{-3}^{}$};
   \draw[-] (1,4) -- (0,4);
   \draw (-1.7,4)node[right] {\scriptsize\scriptsize$P_{-4}^{}=$};
   \draw (-1.7,3.5)node[right] {\scriptsize\scriptsize$Q_{-4}^{}$};
   \draw[-] (0,4) -- (0,5);
   \draw[-,blue] (2,2) -- (2,-2);
   \draw (2.15,-2.3)node[left]{\scriptsize $P_0^{}$};
   \draw[-,blue] (2,-2) -- (-3,-2);
   \draw (-3,-2.1)node[left]{\scriptsize $P_1^{}$};
   \draw[-,blue] (-3,-2) -- (-3,-1);
   \draw (-2.7,-1.1)node[above]{\scriptsize $P_2^{}$};
   \draw[-,blue] (-3,-1) -- (-4,-1);
   \draw (-4.6,-1.1)node[right]{\scriptsize $P_3^{}$};
   \draw[-,blue] (-4,-1) -- (-4,0);
   \draw (-4,0.7)node[below]{\scriptsize $P_4^{}$};
   \draw[-,blue] (-4,0) -- (-5,0);
   \draw[-,dashed] (0,4) -- (0,-5);
   \draw[-,dashed] (-4,0) -- (4,0);
   
   \end{tikzpicture}
   \subcaption*{Genus 4}
\end{subfigure}
\hspace{1cm}
   \begin{subfigure}{0.4\linewidth}
   \centering
    \begin{tikzpicture}[scale=0.6]
   \draw[-,dashed] (6,6) -- (-6,-6);
   \draw (5.8,5.7)node[right] {\scriptsize$y=x$} ;
   \draw[-,dashed] (-6,6) -- (6,-6);
   \draw (-2.8,5.7)node[left] {\scriptsize$y=-x$} ;
   \draw[-] (6,0) -- (5,0);
   \draw (5.4,0.1)node[below] {\scriptsize$Q_{5}^{}$};
   \draw[-] (5,0) -- (5,1);
   \draw (5.4,1.1)node[below] {\scriptsize$Q_{4}^{}$};
   \draw[-] (5,1) -- (4,1);
   \draw (4.4,1.1)node[below] {\scriptsize$Q_{3}^{}$};
   \draw[-] (4,1) -- (4,2);
   \draw (4.4,2.1)node[below] {\scriptsize$Q_{2}^{}$};
   \draw[-] (4,2) -- (3,2);
   \draw (2.8,1.3)node[above] {\scriptsize$Q_{1}^{}$};
   
   \draw[-] (3,2) -- (3,3);
   \draw (3.5,3)node[right]{\scriptsize$Q_{0}^{}$} ;

   \draw[-] (3,3) -- (2,3);
      \draw (1.6,2.2)node[above] {\scriptsize\scriptsize$P_{-1}^{}=$};
   \draw (1.4,1.7)node[above] {\scriptsize\scriptsize$Q_{-1}^{}$};
   \draw[-] (2,3) -- (2,4);
   \draw (1.7,4.3)node[right]{\scriptsize$P_{-2}^{}=$};
   \draw (1.9,3.8)node[right]{\scriptsize$Q_{-2}^{}$};
   \draw[-] (2,4) -- (1,4);
   \draw(0.5,3.6)node[above]{\scriptsize$P_{-3}^{}$};
   \draw(0.9,3.5)node{\scriptsize$=Q_{-3}^{}$};
   \draw[-] (1,4) -- (1,5);
   \draw(1.1,5.2)node[right]{\scriptsize$P_{-4}^{}=Q_{-4}^{}$};
   \draw[-] (1,5) -- (0,5);
   \draw(-1.9,5)node[right]{\scriptsize$P_{-5}^{}=$};
   \draw(-1.7,4.4)node[right]{\scriptsize$Q_{-5}^{}$};
   \draw[-] (0,5) -- (0,6);
   \draw[-,dashed] (0,5) -- (0,-6);
   \draw (-2.1,2)node[left] {\scriptsize$P_0^{}$};
   \draw[-,blue] (2,3) -- (-3,3);
   \draw (-3,-1.7)node[right] {\scriptsize$P_1^{}$};
   \draw [-,blue] (-3,3) -- (-3,-2);
   \draw (-3.8,-1.9)node[left] {\scriptsize$P_2^{}$};
   \draw [-,blue] (-3,-2) -- (-4,-2);
   \draw (-3.8,-1.2)node[above] {\scriptsize$P_3^{}$};
   \draw [-,blue] (-4,-2) -- (-4,-1);
   \draw (-5.5,-1.2)node[above] {\scriptsize$P_4^{}$};
   \draw [-,blue] (-4,-1) -- (-5,-1);
   \draw(-5,0.3)node[above]{\scriptsize$P_{5}^{}$};
   \draw [-,blue] (-5,-1) -- (-5,0);
   \draw[-,blue] (-5,0) -- (-6,0);
   \draw[-,dashed] (-5,0) -- (5,0);
   \end{tikzpicture}
   \subcaption*{Genus 5}
\end{subfigure}
\caption{Symmetric tweezers and their corresponding zigzags of genus four and genus five}\label{figure 4:different tweezers g4 g5}
\end{figure}
Moreover, at each vertex, there is an exterior angle assigned to it. If at $P_k$ the interior angle as above 
is $\theta$, then the corresponding exterior angle is $2\pi-\theta$. The notion of interior and exterior 
angles help us to recognize the image of $\H$ under the Schwarz--Christoffel map, as discussed below. For two 
fixed set of real numbers $t_{-p}\,<\, \ldots\,<\,t_p$, we will use the notation $\Omega_{Gdh}$ (following 
\cite{Weber1998TeichmullerTA}) to mean $\H\cup\R$ with the flat metric induced by the unique Schwarz--Christoffel 
map taking $\H$ to the interior of the polygon enclosed by the tweezer in the side of the interior angles 
such that the map sends $\R$ to the tweezer and the point $t_j$ to $P_j$. Similarly,  for another set of real numbers $s_{-p}\,<\,\ldots\,<\,s_p$ we can find a unique 
Schwarz--Christoffel map, sending $\H$ to the interior polygon enclosed by the tweezer in the side of the 
exterior angle such that the image of $s_j$ is $P_{-j}$ which we call as $Q_j$.  We call $\H\cup\R$ with the flat metric induced by 
this map as $\Omega_{G^{-1}dh}$. We write the Schwarz--Christoffel maps for the corresponding regions:
\begin{eqnarray*}
    z\,\,\longmapsto\,\,
    \int_{ i }^z \prod_{j=-p}^p (t-t_j)^{\frac{a_j}{2}}dt & \text{ on }\ \, \Omega_{Gdh},\\
    z\,\,\longmapsto\,\,
    \int_{ i }^z \prod_{j=-p}^p (t-s_j)^{\frac{b_j}{2}}dt & \text{ on }\ \, \Omega_{G^{-1}dh},
\end{eqnarray*}
where
\begin{eqnarray*}
    a_j\,=\,\begin{cases}
    \pm 1 & \text{ alternatively when }\, j \,\neq\, \pm 1,\,0,\\
    -1 & \text{ when }\, j\,=\,\pm 1,\\
    1  & \text{ when }\, j\,=\,0,\,p\, \text{ is odd},\\
    -1  & \text{ when }\, j\,=\,0,\,p\, \text{ is even},
\end{cases}\\
b_j\,=\,\begin{cases}
    \mp 1 & \text{ alternatively when }\, j \,\neq\, \pm 1,\,0,\\
    1 & \text{ when }\, j\,=\,\pm 1,\\
    -1  & \text{ when }\, j\,=\,0,\,p\, \text{ is odd},\\
    1  & \text{ when }\, j\,=0,\,p\, \text{ is even}.
\end{cases}
\end{eqnarray*}

It is evident from our notation that the same convention used for naming vertices in the case of a zigzag has been applied to the vertices of the boundary of $\Omega_{Gdh}$ and $\Omega_{G^{-1}dh}$. If we denote the vertices of $\Omega_{Gdh}$ as $\lbrace P_j\rbrace_{j=-p}^p$ with $P_{\infty}=\infty$, then we have chosen to name the vertices in the reverse order for $\Omega_{G^{-1}dh}$, i.e., the vertices are labeled as $Q_j=P_{-j}$ and $Q_{\infty}=\infty$

From a tweezer, we can obtain a zigzag by simply mapping the points of the tweezers $P_j$, where $j\,\geq\, 1$, 
to $-P_j$,  $P_0$  to $- i P_0$ when $p$ is even and $P_0$ to $ i P_0$ when $p$ is odd. Reversing the same process, we can get the corresponding tweezer from a given zigzag. For the case $p=1$, the corresponding tweezer of genus $1$ zigzag is just the rotation of the zigzag. Therefore, we skip the discussion of genus $1$ tweezer except in some places where we need it. This zigzag may not necessarily be symmetric.

We call a tweezer symmetric if it is symmetric with respect to the line $y\,=\,-x$. It is clear that
a zigzag that corresponds to a symmetric tweezer is symmetric along the line $y\,=\,x$.

\begin{definition}\label{defn:reflxive tweezers}
A symmetric tweezer is said to be reflexive if there is a conformal map $\phi\,:\, \Omega_{Gdh}
\,\longrightarrow\, \Omega_{G^{-1}dh}$ taking vertices to vertices, i.e., $\phi(P_j)\,=\,Q_j$.
\end{definition}

In the context of an ortho-disk, a tweezer gives rise to two ortho-disks, denoted as $\Omega_{Gdh}$ with vertex data $\lbrace t_j\rbrace_{j=-p}^{p}$ and $\Omega_{G^{-1}dh}$ with vertex data $\lbrace s_j\rbrace_{j=-p}^{p}$. Here, $t_j$ is mapped to $P_j$, and $s_j$ is mapped to $Q_j$. For a symmetric tweezer, these two ortho-disks are conjugate. Reflexivity implies that the corresponding conformal polygons are conformal.

\subsection{Riemann surfaces from the tweezer}

Similar to zigzags as in Section \ref{subsection:minimal surface weber & wolf}, we can construct first the one punctured spheres $S_{\Omega_{Gdh}},S_{\Omega_{G^{-1}dh}}$ with marked points $\lbrace P_j\rbrace_{j=-p}^{p}, \lbrace P^{\prime}_{j}= P_{-j}\rbrace_{j=-p}^{p}$ with puncture $\{P_{\infty}=\infty,\;P^{\prime}_{\infty}=\infty\}$,   and finally the hyperelliptic Riemann surfaces $\mathcal{R}_{Gdh},\;\mathcal{R}_{G^{-1}dh}$ respectively, and if the tweezer is reflexive, we get Riemann surfaces $\mathcal{R}_{Gdh},\; \mathcal{R}_{G^{-1}dh}$ conformal taking corresponding Weierstrass points to Weierstrass points.   We denote these Riemann surfaces as $R_T$.   For a given tweezer, we can construct the zigzag as discussed in the earlier subsection. Let for the corresponding zigzag, the marked sphere be $S_Z$ with puncture $\{P_{\infty}=\color{black}\infty\}$\color{black}, and the corresponding hyperelliptic Riemann surfaces 
be $R_Z$.

\begin{prop}\label{prof:RiemannsurfaceComprasion}
   For a fixed genus $p \geq 2$, the Riemann surface $R_T$ is neither conformal nor anticonformal to $R_Z$ by a mapping that maps Weierstrass points to Weierstrass points. Here, $T$ and $Z$ denote the tweezer and its corresponding zigzag of genus $p$, respectively.

\end{prop}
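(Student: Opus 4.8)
The plan is to distinguish $R_T$ from $R_Z$ by comparing the cone-angle data of the singular flat metrics that define them, since a conformal (or anticonformal) map between hyperelliptic Riemann surfaces carrying Weierstrass points to Weierstrass points must respect the branch divisor, and hence must send the underlying marked sphere $S_Z$ to $S_{\Omega_{Gdh}}$ (or its conjugate) by a Möbius (or anti-Möbius) transformation preserving the respective sets of $2p+1$ finite marked points plus the puncture. First I would record precisely the cone-angle profile along the boundary arc in each case. For the zigzag of genus $p$, the interior angles at the $2p+1$ finite vertices alternate strictly between $\frac{\pi}{2}$ and $\frac{3\pi}{2}$ with no exception, so (after the hyperelliptic doubling) the cone angles at the $2p+1$ Weierstrass points come in exactly two values, say $\pi$ and $3\pi$, in a strictly alternating pattern. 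For the tweezer, by the defining conditions (1)--(3) in Section \ref{definition:tweezer}, the angle at $P_{\pm 1}$ is forced to be $\frac{\pi}{2}$ and the angle at $P_0$ is $\frac{\pi}{2}$ or $\frac{3\pi}{2}$ according to the parity of $p$; this breaks the alternation near the three central vertices $P_{-1},P_0,P_1$. Concretely, the exponents $a_j$ listed in Section \ref{definition:tweezer} give, at $j=-1,0,1$, the string of signs $(-1,\pm 1,-1)$ rather than the alternating $(\,\cdot,+1,-1,+1,\cdot\,)$ or $(\,\cdot,-1,+1,-1,\cdot\,)$ that a zigzag would produce at three consecutive vertices.

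Next I would argue that this difference is an honest conformal invariant of the marked surface. A conformal isomorphism $R_T\to R_Z$ respecting Weierstrass points descends to a biholomorphism $\Phi$ of the quotient spheres taking the $2p+1$ branch points of one to those of the other and the puncture to the puncture; thus $\Phi$ is the restriction of a Möbius transformation. Such $\Phi$ carries the singular flat metric $|dz|^2$ pulled back from $\Omega_{G^{-1}dh}$ to a singular flat metric on $S_Z$ whose cone angles at the images of the $P_j$ are the same numbers; but the flat structure on $S_Z$ coming from the zigzag has the alternating angle profile, and a given marked sphere admits at most one such flat metric with prescribed cone angles up to scale (the Schwarz--Christoffel datum is rigid once the conformal polygon and exponents are fixed), so the two angle multisets-with-cyclic-order must coincide. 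They do not: one has three consecutive equal-or-almost-equal angles $(\frac{\pi}{2} \text{ or } \frac{3\pi}{2},\,\frac{\pi}{2},\,\frac{\pi}{2})$ forced at $P_{-1},P_0,P_1$, while the zigzag never has two consecutive vertices with the same angle. The anticonformal case is handled identically, since complex conjugation sends the tweezer's exponent data to the conjugate ortho-disk's data, which still has the same central irregularity, and reflection reverses but does not destroy the cyclic angle pattern.

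The main obstacle, and the step that needs care, is the passage from ``the boundary arc has different interior angles'' to ``the doubled hyperelliptic Riemann surfaces are not conformally equivalent by a map preserving Weierstrass points.'' One must make sure that the cone-angle data at the Weierstrass points, together with the position of the puncture, really determines the relevant conformal information — i.e. that one cannot absorb the discrepancy by allowing a nontrivial Möbius relabeling of the $2p+1$ points. Here the cyclic order is essential: the Weierstrass points inherit a cyclic order from lying on the real line (the boundary of $\H$), any Möbius map preserving $\mathbb{H}\cup\mathbb{R}$ and the puncture at $\infty$ either preserves or reverses this cyclic order, and the ``angle sequence up to rotation and reflection'' is therefore a genuine invariant. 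Since the zigzag's angle sequence is $2$-periodic along the whole chain while the tweezer's has a local defect of length $3$ at the center that no rotation or reflection of a strictly alternating sequence can match (for $p\ge 2$ there is enough room on both sides of the defect to detect it), the two sequences are inequivalent, giving the claimed non-equivalence. I would also remark that the hypothesis $p\ge 2$ is used precisely here — for $p=1$ the ``defect'' is the whole sequence and, as noted in Section \ref{definition:tweezer}, the tweezer is merely a rotated zigzag.
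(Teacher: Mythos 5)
Your argument is essentially the paper's own proof: both descend the hypothetical (anti)conformal map to a M\"obius map of the underlying marked spheres, note that it must carry the boundary circle of marked points to the boundary circle and hence match up the vertex arcs, and then derive a contradiction from the fact that the interior/cone angles at the three central vertices $P_{-1},P_0,P_1$ of the tweezer fail to alternate while the zigzag's angles alternate everywhere (with the same even/odd case split and the same remark that $p\ge 2$ is needed because the genus-one tweezer is just a rotated zigzag). The one difference is that you explicitly address the possibility of relabeling the $2p+1$ Weierstrass points by tracking the cyclic order on the real line, whereas the paper simply posits the correspondence $f(P_j)=Q_j$ at the outset; this extra care is welcome but does not change the substance or the key step of the argument.
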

\begin{proof}
Suppose there is a conformal or anticonformal map $f$ between $R_T$ and $R_Z$ that takes corresponding Weierstrass points to each other, i.e., $f(W_T) = W_Z$ for Weierstrass points $W_T$ on $R_T$ and $W_Z$ on $R_Z$. This means the marked sphere $S_T$ maps to the marked sphere $S_Z$ with $f(P_j)=Q_j$, $f(P_{\infty})= Q_\infty$.

If $f$ is conformal and it fixes at least $4$ points on the real line, which are the Weierstrass points when we consider the restriction of $f$ to the sphere $S_{T}$. In this case, therefore, $f$ would fix the entire real line, effectively taking the tweezer $T$ to the corresponding zigzag $Z$.

For the zigzag,  the angle between two consecutive sides alternates, whereas for the tweezer when $p\geq 2$, this alternation will not always occur (as seen in the Figure \ref{figure 3:different tweezers g2 g3}, \ref{figure 3:different tweezers g4 g5}).

If $p$ is even, taking a neighborhood $\mathcal{U}$ containing the arc $P_{-2}P_{-1}P_{0}P_1P_2$, we would find that the restricted map  $f$  on $\mathcal{U}$ should map this arc to the corresponding arc of the zigzag $Q_{-2}Q_{-1}Q_0Q_{1}Q_2$. However, the interior angles at $P_{-1},P_{0},P_{1}$ are $\frac{\pi}{2},\frac{\pi}{2},\frac{\pi}{2}$ respectively, while the corresponding angles at $Q_{-1},Q_{0},Q_{1}$ are $\frac{3\pi}{2},\frac{\pi}{2},\frac{3\pi}{2}$ respectively, which is impossible due to conformality. Thus, no such conformal $f$ exists when $p$ is even and $p\geq 2$.

For odd $p\geq 2$, considering a neighborhood $\mathcal{V}$  containing the arc $P_{-3}P_{-2}P_{-1}P_0$, a similar comparison between the angles at $P_{-2},P_{-1}$ and their corresponding vertices in $Z$ leads to a contradiction. Hence, no such $f$ exists when $p$ is odd.

If there is an anticonformal map with similar conditions, the arguments remain the same, concluding that no such anticonformal map exists for either even or odd $p\geq 2$.
\end{proof}

In view of the above proposition, if we construct a maxface and minimal surface, those will not be isometric to the ones we get from the zigzags as in Section \ref{section:Minimal surface and maxface with enneper end}. In the next section, we will generate minimal and maximal surfaces using these tweezers, similar to the previous subsections. 

\section{\textbf{Minimal Surface and Maximal Surface with Tweezers $X_T$ and $\widetilde{X}_T$}}

Similar to the zigzag case as in the Section \ref{subsection:minimal surface weber & wolf}, we obtain non-vanishing holomorphic forms $\omega_{Gdh}$ and $\omega_{G^{-1}dh}$ on $\mathcal{R}_{Gdh}$ and $\mathcal{R}_{G^{-1}dh}$ respectively.
If we 
start from the reflexive tweezer $T$, the conformal map $\phi\,:\, \Omega_{Gdh} \,\longrightarrow\, 
\Omega_{G^{-1}dh}$ can be extended to a map $\widetilde\phi\,:\, 
\mathcal{R}_{Gdh} \,\longrightarrow\, \mathcal{R}_{G^{-1}dh}$ --- between
the corresponding hyperelliptic Riemann surfaces --- for which $\widetilde\phi(P_j) \,=\, Q_j$ for all $j$.
Define the following four holomorphic forms on $\mathcal{R}_{Gdh}$:
\begin{align*}
    \alpha_T &\,=\,
e^{-\frac{i \pi }{4}}\omega_{Gdh}, \\
    \beta_T &\,=\,
e^{-\frac{i \pi }{4}}\widetilde{\phi}^*(\omega_{G^{-1}dh}), \\
    \widetilde\alpha_T &\,= \,
e^{\frac{i \pi }{4}}\omega_{Gdh}, \\
    \widetilde\beta_T &\,= \,
e^{\frac{i \pi }{4}}\widetilde{\phi}^*(\omega_{G^{-1}dh}).
\end{align*}

Using the relationship between the cone angles and the order of the zeros of the $1$-forms, we find the divisors 
corresponding to these forms:

\begin{align*}
    (\alpha_T) = (\widetilde\alpha_T) &=\,
    \begin{cases}
        P_{\pm p}^2 P_{\pm (p-2)}^2 \ldots P^2_{\pm 3}P^2_{0}P_{\infty}^{-2} & \text{if } p\color{black} \text{ is odd}\\
        P_{\pm p}^2 P_{\pm (p-2)}^2 \ldots P^2_{\pm 2}P_{\infty}^{-2} & \text{if } p\color{black} \text{ is even}
    \end{cases}\\
    (\beta_T) = (\widetilde\beta_T) &=\,
    \begin{cases}
        P_{\pm (p-1)}^2 P_{\pm (p-3)}^2 \ldots P^2_{\pm 2}P_{\pm 1}^2P_{\infty}^{-4} & \text{if } p\color{black} \text{ is odd}\\
        P_{\pm (p-1)}^2 P_{\pm (p-3)}^2 \ldots P_{\pm 3}^2P_{\pm 1}^2P_{0}^2P_{\infty}^{-4} & \text{if }  p\color{black} \text{ is even}
    \end{cases}
\end{align*}

It is important to highlight that we use multiplicative notation for the divisor, while addition and subtraction follow the conventional complex number operations.   

The differential of the holomorphic covering $\pi_{Gdh}$ (branched at $P_j$) has the following 
divisor:
\begin{equation*}
    (d\pi_{Gdh}) \,=\, P_{\pm p}^1P_{\pm (p-1)}^1 \ldots P_0^1 P_{\infty}^{-3}.
\end{equation*}
The quadratic differentials $\alpha_T \beta_T$ and $d\pi_{Gdh}^2$ share an identical set of zeros and poles, as do $\widetilde{\alpha_T}\widetilde{\beta_T}$. Consequently, there exist appropriate constants $c$ and $\widetilde{c_T}$ such that: 
$\alpha_T\beta_T = (dh_T)^2$, where $dh_T = c_T d\pi_{Gdh}$ and 
$\widetilde{\alpha_T}\widetilde{\beta_T} = (\widetilde{dh_T})^2$, where $\widetilde{dh_T} = \widetilde{c_T}d\pi_{Gdh}$

Now, we define the following formal Weierstrass data for our minimal surface and maximal maps:

\begin{align*}
    \left(G_T \,=\, \frac{\alpha_T}{dh_T},\, dh_T\right)\, \text{ for the minimal surface,}\\
   \left(\widetilde{G}_T \,=\, \frac{\widetilde\alpha_T}{\widetilde{dh_T}},\, \widetilde{dh_T}\right)\, \text{ for the maximal surface}.
\end{align*}

Note that the divisor condition, as in the Equation \ref{divisor}, is trivially satisfied since at each zero and pole of $G_T$ and $\widetilde{G}_T$, there exists a zero of $d\pi_{Gdh}$ with an equal order.

The period problem can be resolved using the same technique as discussed in Section \ref{Subsec:Maximalsurface tildeX}. We begin with the basis of homology, denoted as $H_1(\mathcal{R}_{{Gdh}}, \mathbb Z)$.   For $j = -p, \ldots, p - 1$, let $B_j$ represent the loop in $S_{\Omega G_{dh}}$ that encloses only the line segment $P_jP_{j+1}$ within the disk and no other vertices in its interior.  These curves have closed lifts $\widetilde{B_j}$ to $\mathcal{R}_{Gdh}$ and form a homology basis of $\mathcal{R}_{Gdh}$. The following statements are valid:

\begin{align*}
    \int_{\widetilde{B_j}}\alpha_T &= 2e^{-\frac{i \pi }{4}}(P_j-P_{j+1}) \\
    \int_{\widetilde{B_j}}\beta_T &= 2e^{-\frac{i \pi }{4}}(P_{-j}-P_{-j-1}) \\
    \int_{\widetilde{B_j}}\widetilde\alpha_T &= 2e^{\frac{i \pi }{4}}(P_j-P_{j+1}) \\
    \int_{\widetilde{B_j}}\widetilde\beta_T &= 2e^{\frac{i \pi }{4}}(P_{-j}-P_{-j-1}).
\end{align*}

Due to the symmetry of the tweezers, we can further deduce that

\begin{align}
    \int_{\widetilde{B_j}}\alpha_T &\,=\, \overline{\int_{\widetilde{B_j}}\beta_T} \label{equation 8.1:period problem:minimal surface} \\
    \int_{\widetilde{B_j}}\widetilde\alpha_T &\,=\, -\overline{\int_{\widetilde{B_j}}\widetilde\beta_T}. \label{equation 8.2:period problem:maximal surface}
\end{align}

Moreover $dh_T$ and $\widetilde{dh_T}$ are exact, therefore for all loops $\int_{\widetilde{B_j}}dh_T\,=\,\int_{\widetilde{B_{j}}}\widetilde{dh_T}\,=\,0$. Thus, Equations \eqref{equation 
8.1:period problem:minimal surface} and \eqref{equation 8.2:period problem:maximal surface} confirm that the 
following maps are minimal and maximal, respectively:

\begin{align}
    X_T(z\color{black}) &\,=\, \operatorname{Re} \int_0^{z\color{black}} \left( (G_T^{-1} - G_T)  \frac{dh_T}{2},\,
  i  ( G_T^{-1} + G_T ) \frac{dh_T}{2},\, dh_T \right) \label{eqn:minimalsurfaceTweezer} \\
    \widetilde{X}_T(z\color{black}) &\,=\, \operatorname{Re} \int_0^{z\color{black}} \left( (\widetilde{G}_T^{-1} + G_T)
\frac{\widetilde{dh_T}}{2},\,   i  ( \widetilde{G}_T^{-1} -\widetilde{G}_T ) \frac{\widetilde{dh_T}}{2},\,  \widetilde{dh_T} \right).\label{eqn:maximalsurfacetweezer}
\end{align}

We have the following theorem\color{blue}:\color{black} 
\begin{theorem}\label{thm:surfaceFromTweezer}
    Given a reflexive tweezer $T$ of genus $p$, there exist a minimal surface $X_T$ and a maxface $\widetilde{X}_T$ of genus $p$, each having one Enneper end and at most eight symmetries. Furthermore, $X_T$ (respectively, $\widetilde{X}_T$) is not symmetric to the minimal surface $X$ (respectively, $\widetilde{X}$) as discussed in Section \ref{section:Minimal surface and maxface with enneper end}.
\end{theorem}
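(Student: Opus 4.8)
The plan is to verify the three conditions of the Weierstrass--Enneper representation (Section \ref{weirstrassdata}) together with the completeness criterion of Fact \ref{fact}, and then to invoke Proposition \ref{prof:RiemannsurfaceComprasion} to rule out symmetry with the zigzag surfaces. Most of the analytic work has already been assembled in the preceding discussion, so the proof is largely a matter of organizing it: the divisor condition \eqref{divisor} is noted to hold because each zero or pole of $G_T$ (resp. $\widetilde G_T$) is matched by a zero of $d\pi_{Gdh}$ of equal order; the condition $|G_T|\not\equiv 1$, $|\widetilde G_T|\not\equiv 1$ holds since $\widetilde G_T$ has a genuine zero at $P_\infty$; and the period conditions \eqref{PeriodProblem} follow from \eqref{equation 8.1:period problem:minimal surface}, \eqref{equation 8.2:period problem:maximal surface}, and the exactness of $dh_T$, $\widetilde{dh_T}$ on the homology basis $\{\widetilde B_j\}$. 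Hence \eqref{eqn:minimalsurfaceTweezer} and \eqref{eqn:maximalsurfacetweezer} define a minimal immersion and a maxface respectively.

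First I would record that $\mathcal R_{Gdh}$ is a compact hyperelliptic Riemann surface of genus $p$ (it is the hyperelliptic cover of the sphere branched at the $2p+1$ finite vertices $P_j$ together with $P_\infty$, giving $2p+2$ branch points), and that removing the single puncture over $P_\infty$ yields the surface on which the data live; this gives condition (1) of Fact \ref{fact}. Condition (2) is exactly $|\widetilde G_T(P_\infty)|\neq 1$, which holds because $\widetilde G_T$ vanishes there. For condition (3), I would examine the local behavior of $\widetilde{dh_T}=\widetilde{c_T}\,d\pi_{Gdh}$ and $\widetilde G_T$ near $P_\infty$ using the divisor computations already displayed: $(\widetilde\alpha_T)$ has a pole of order $2$ at $P_\infty$ while $(d\pi_{Gdh})$ has a pole of order $3$, so $\widetilde G_T=\widetilde\alpha_T/\widetilde{dh_T}$ has a zero of order $1$ at $P_\infty$; then the induced metric $ds^2=\tfrac14(|\widetilde G_T|^{-1}-|\widetilde G_T|)^2|\widetilde{dh_T}|^2$ blows up like $|\widetilde G_T|^{-2}|\widetilde{dh_T}|^2$, which in a local coordinate $w$ at the puncture behaves like $|w|^{-2}\cdot|w|^{-6+?}$; carrying out this order count shows the metric is complete at the end, and identifying the end's Weierstrass behavior $(\widetilde G_T,\widetilde{dh_T})\sim (w, w^{-3}\,dw)$-type confirms it is an Enneper end (cone angle data consistent with the $a_\infty$ computed from \eqref{eqn:weight}). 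The count of symmetries: the reflectional symmetry of the tweezer about $y=-x$, the hyperelliptic involution, and the resulting conformal/anticonformal automorphisms give a group of order at most eight, as in Theorem \ref{prop:maxfacefromZigzag}.

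Finally, for the non-symmetry claim: a symmetry (conformal or anticonformal isometry of $\mathbb E_1^3$, resp. $\mathbb R^3$) between $\widetilde X_T$ and $\widetilde X$ (resp. $X_T$ and $X$) would induce a conformal or anticonformal automorphism between the underlying Riemann surfaces carrying Weierstrass data to Weierstrass data, hence in particular taking branch points to branch points; but $\widetilde X_T$, $X_T$ are defined on $\mathcal R_{Gdh}=R_T$ and $\widetilde X$, $X$ on $\mathcal R_{NE}=R_Z$, and Proposition \ref{prof:RiemannsurfaceComprasion} says no such map exists for $p\geq 2$. This is the step I expect to require the most care, since one must argue that an ambient isometry of the surfaces really does descend to a map of the Riemann surfaces respecting the hyperelliptic structure (the Weierstrass points are intrinsically characterized as the zeros/poles of the Gauss map together with the ends, so an isometry must permute them); once that is in place, Proposition \ref{prof:RiemannsurfaceComprasion} closes the argument. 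The genus-$1$ case is excluded from the tweezer construction by hypothesis ($p\geq 2$), consistent with the remark that the genus-$1$ tweezer is merely a rotation of the zigzag.
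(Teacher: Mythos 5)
Your proposal is correct and follows essentially the same route as the paper: the period conditions come from \eqref{equation 8.1:period problem:minimal surface}, \eqref{equation 8.2:period problem:maximal surface} and the exactness of $dh_T$, $\widetilde{dh_T}$; the divisor condition gives the maxface property; completeness follows from the zero of $\widetilde G_T$ at $P_\infty$ (compact singular set) together with Enneper-type end data; and non-symmetry is reduced to Proposition \ref{prof:RiemannsurfaceComprasion}. You simply carry out the order counts at the puncture and the descent-to-Weierstrass-points argument in more detail than the paper's (quite terse) proof does.
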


\begin{proof}
Equation \eqref{eqn:minimalsurfaceTweezer} defines the minimal surface, and Equation \eqref{eqn:maximalsurfacetweezer} defines the corresponding maximal map.

As we have already established the divisor condition, as in Equation \eqref{divisor}, it follows that the maximal map given in Equation \eqref{eqn:maximalsurfacetweezer} indeed represents a maxface.

Concerning completeness, we first observe that at the end of the maxface, the Gauss map has a zero. Consequently, the singular set  $\{p\color{black}: |\widetilde{G}_T(p\color{black})|=1\}$ is compact. Furthermore, we can confirm that the metric is complete at the end of both $X_T$ and $\widetilde{X}_T$ since the data at the end is the same as the data for the Enneper end.

The remaining task involves proving that the minimal surface $X_T$ is not symmetric to the minimal surface $X$. If such symmetry existed, there would either be a conformal or anticonformal diffeomorphism between the corresponding Riemann surfaces. However, as stated in Proposition \ref{prof:RiemannsurfaceComprasion}, this is not possible.
\end{proof}

\section{\textbf{Existence of a reflexive tweezer}}

In this section, we will demonstrate the existence of a reflexive tweezer. The proof closely follows the 
approach used for the zigzag case, as detailed in Section 5 of \cite{weber1998minimal}. While we could have 
directly stated the existence of the tweezer as a corollary of the zigzag and ortho-disks case, we choose to 
present it here for the sake of clarity and comprehensiveness.
Therefore, the content presented below does not introduce new concepts; instead, it serves as an application of the arguments found in various works by Weber and Wolf. Consequently, we aim to emphasize several key points within the context of tweezers, and we will discuss these in the following subsections.

\subsection{Space of Tweezers $\mathcal{T}_p$}

Two symmetric tweezers of genus $p$, denoted as $T$ and $T'$, are considered equivalent if the corresponding pairs 
of regions $(\Omega^T_{Gdh},\, \Omega^T_{G^{-1}dh})$ and $(\Omega^{T'}_{Gdh},\, 
\Omega^{T'}_{G^{-1}dh})$ are conformal by a map that takes each vertex to the same vertex).
Let $\mathcal{T}_p$ denote the class of equivalent symmetric tweezers of genus $p$. Furthermore, there always 
exists a conformal map of $\mathbb{C}$ that carries any symmetric tweezer to a symmetric tweezer $t_0$, such that 
the two endpoints $P_{-p}$ and $P_p$ are $i$ and $-1$ respectively, and $P_{k}=-i\overline{P_{-k}}$. From now 
onwards, we will represent a class of tweezer $[T]\,\in\,\mathcal{T}_p$ by the corresponding tweezer $t_0$ in $[T]$, 
unless stated otherwise.

Similar to the space $\mathcal{Z}_p$ as defined in \cite{weber1998minimal}, we define a map 
$$\mathcal{T}_p\,:\,\mathcal{T}_p\,\longrightarrow\,\mathbb{C}^{p-1}$$ as follows:
\[
\mathcal{T}_p([t_0]) \,=\, (P_1(t_0),\,\cdots,\,P_{p-1}(t_0)).
\]

This map induces a topology on $\mathcal{T}_p$, making it homeomorphic to a $(p-1)$-cell.

\subsection{Height function, sufficient condition of the reflexive tweezers, and its properness}

We will follow Subsection 4.2 of \cite{weber1998minimal} in the present context.  Consider the $2(p+1)$-pointed
sphere $S_{\Omega_{Gdh}}$ with marked points $P_{-p},\, \ldots,\, P_0,\, \ldots,\, P_p$, and $P_\infty$. Remarkably, 
$S_{\Omega_{Gdh}}$ exhibits two distinct reflective symmetries: one related to the image of $T$, and another 
arising from a corresponding symmetry of the tweezer.

We denote $[C_k]$ the homotopy class of simple curves that encircle the points $P_k$ and $P_{k+1}$ for
$k \,=\, 1,\, \ldots,\, p-1$. Similarly, $[C_{-k}]$ is the homotopy class of simple curves enclosing the
points $P_{-k}$ and $P_{-k-1}$ for $k \,=\, 1,\, \ldots,\, p - 1$. Furthermore, we define $[\alpha_k]$ as the combined pair of classes $[C_k] \cup [C_{-k}]$.

From the homotopy class of mappings that connect $S_{\Omega_{Gdh}}$ to $S_{\Omega_{G^{-1}dh}}$ (while preserving 
each of the vertices), we derive corresponding homotopy classes of curves on $S_{\Omega_{G^{-1}dh}}$, which are also
denoted by $[\alpha_k]$.
Additionally, we denote $$\text{E}_{\Omega_{Gdh}}(k)\,:=\, \text{Ext}_{S_{\Omega_{Gdh}}}([\alpha_k])
\ \  \text{ and } \ \ \text{E}_{\Omega_{G^{-1}dh}}(k)\,:=
\,\text{Ext}_{S_{\Omega_{G^{-1}dh}}}([\alpha_k]),$$ representing the extremal lengths of $[\alpha_k]$
within the domains of $S_{\Omega_{Gdh}}$ and $S_{\Omega_{G^{-1}dh}}$ respectively. 

Motivated by Definition 4.4 of \cite{weber1998minimal}, we construct the similar height function
$$D^T\,:\,\mathcal{T}_p \,\longrightarrow\, \mathbb{R}$$ defined by

\begin{eqnarray}\label{defn: DT}
D^T(T) = &&\sum_{j=1}^{p-1} \left(\exp\left(\frac{1}{\text{E}_{\Omega_{Gdh}}(j)}\right) - \exp\left(\frac{1}{\text{E}_{\Omega_{G^{-1}dh}}(j)}\right)\right)^2 \nonumber\\
&&+  \sum_{j=1}^{p-1}\left(\text{E}_{\Omega_{Gdh}}(j) - \text{E}_{\Omega_{G^{-1}dh}}(j)\right)^{2}.
\end{eqnarray}
From the same argument as in the case of a zigzag (see \cite[Section 4.3]{weber1998minimal}) it is clear that
$D^T(T)\,=\,0$ if and only $T$ is reflexive. Moreover the properness of $D^T$  is a direct consequence of
Lemma 4.7.1, and Lemma 4.7.2 of \cite{Weber1998TeichmullerTA}.

\subsection {Tangent vector at $t_0\in \mathcal{T}$}

\begin{figure}
    \centering
   \includegraphics[scale=0.8]{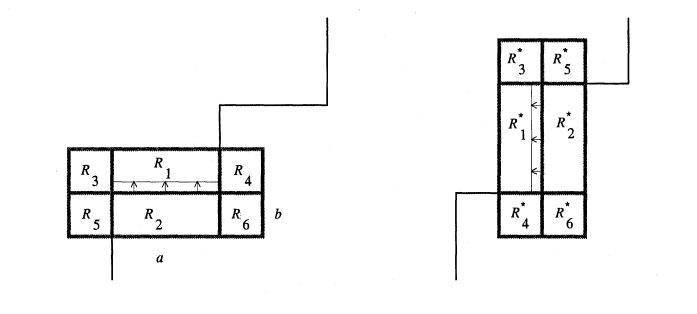}
    \caption{Picture from the Section 5.3.1 \cite{Weber1998TeichmullerTA}}
    \label{fig:FepsilonandFepsilon*}
\end{figure}
Let us fix an edge $E$ that is horizontal; the symmetric tweezer will give the corresponding edge $E^*$, which is vertical. 
For given $b, \delta$, take maps $f_\epsilon^E$ and $f_{\epsilon^*}^E$ as in \cite[(5.1)(a)]{Weber1998TeichmullerTA} and
\cite[(5.1)(b)]{Weber1998TeichmullerTA} respectively.
Geometrically, these maps push the horizontal and vertical edges so that after applying both maps, the tweezer structure
is preserved (see Figure~\ref{fig:FepsilonandFepsilon*}).  Under the reflection across the line $y\,=\,-x$, the region $R_i$ is
mapped to the region $R_i^*$ as in figure \ref{fig:FepsilonandFepsilon*}.

We call the map $f^E_\epsilon \circ f^{E}_{\epsilon^*}$ above the ``pushing out and pulling in''  map for the edge $E$.

Let $\nu_\epsilon \,:=\, \frac{(f_\epsilon)_{\overline{z}}}{(f_\epsilon)_{z}}$ represents the Beltrami differential of $f_\epsilon$, and 
define $\dot{\nu} \,= \,\left. \frac{d}{d\epsilon}\right|_{\epsilon=0}\nu_\epsilon$. Similarly, let $\dot{v^{*}}$ denote the infinitesimal 
Beltrami differential of $f^*_\epsilon$. Expressions for $\dot{\nu}$ and $\dot{\nu^{*}}$ are given in
\cite[(5.1)(a)]{Weber1998TeichmullerTA} and \cite[(5.1)(b)]{Weber1998TeichmullerTA} respectively.

We take $\dot{\mu} \,= \,\dot{\nu} + \dot{\nu}^*$. This is a Beltrami differential supported on a bounded domain in
$\mathbb{C}\,=\, \Omega_{Gdh}\cup t_0\cup\Omega_{G^{-1}dh}$. Thus, this pair of Beltrami differentials lifts to a pair 
\begin{equation}\label{tangentvector}
    \dot{\mu}\,=\,(\dot{\mu}_{\Omega_{Gdh}},\, \dot{\mu}_{\Omega_{G^{-1}dh}})
\end{equation}
on the pair $S_{\Omega{Gdh}}$ and $S_{\Omega_{G^{-1}dh}}$. 

The above defined $\dot{\mu}$ represents a tangent vector to $\mathcal{T}_p$ at $t_0$.  The above process will yield different tangent vectors for different ``pushing out and pulling in'' maps.

\subsection{Derivative of extremal length function, and corresponding quadratic differential}

Let $\Phi_k$ represent the quadratic differential associated with the homotopy class of curve $\alpha_k$, defined as
$$
\Phi_k\,:=\, \frac{1}{2} d\text{ Ext}([\alpha_k])|_{\Omega_{Gdh}}\ \  \text{ and, }
$$
\begin{equation}\label{eqn:derivativeofExtermal}
    \left(d\text{ Ext}([\alpha_k])|_{\Omega_{Gdh}}\right)[\widehat{\nu}] \,=\, 4\text{Re}\int_{\Omega_{Gdh}}
\Phi_k\widehat{\nu}.
\end{equation}
The horizontal foliation of $\Phi_k$ comprises curves connecting the same edges, as $C_k$, within $\Omega_{Gdh}$. Furthermore, it preserves the reflective symmetry of the element in $\mathcal{T}_p$. Consequently, these foliations must either run parallel to or be perpendicular to the fixed sets of reflections (which correspond to the tweezer).  Here, we are using the same notation for the foliation $\Phi_k$ both on the marked sphere $S_{\Omega_{Gdh}}$ and on $\Omega_{Gdh}$.

Consider the edge $E$ connecting vertices $v_1$ and $v_2$, but $v_i\,\neq\, P_0$. Let $\Phi_k$ denote the quadratic differential 
corresponding to the homotopy class of curve $C_k$ containing both $v_1$ and $v_2$.  Since genus $p\,\geq\, 2$, in this context, there is 
such an edge for which one of the following conditions holds for vertex $v_i$:
\begin{enumerate}
    \item The ortho-disk has an angle of $\frac{\pi}{2}$ at $v_i$.
    \item the vertex $v_i$ lies at infinity.
    \item The ortho-disk has an angle of $\frac{3\pi}{2}$ at $v_i$, and the foliation $\Phi_k$ is either parallel or orthogonal to the edges incident to $v_i$.
\end{enumerate} 

Using Proposition 5.3.2 from \cite{Weber1998TeichmullerTA} we conclude that the holomorphic quadratic differential $\Phi_k$ is 
admissible on the edge corresponding to points $v_1$ and $v_2$.

In the next subsection, we will select an edge $E$ such that the corresponding foliation is admissible on $E$. Additionally, we will 
consider the tangent vector $\dot{\mu}$ as described in \eqref{tangentvector}, which we obtain by applying the corresponding pushing out and pulling in function 
$f_\epsilon^E \circ f_{\epsilon^*}^E$ to the tweezer.  Without loss of generality, we can select the edge corresponding to $P_{-1}$ and 
$P_{-2}$.

\subsection{Reflexive tweezer}
Since the holomorphic quadratic differential $\Phi_k$  is admissible for the edge we selected (as demonstrated in the previous subsection), we can apply Lemma 5.3.1 from \cite{Weber1998TeichmullerTA}, leading to the following corollary for the tweezer case:
\begin{corollary}\label{sgnQuadraticDifferential}
The expression $sgn(\Phi_k \dot{\mu}_{\Omega_{Gdh}}(q))$ maintains a constant sign for $q \in E$, which is opposite to $sgn(\Phi_k \dot{\mu}_{\Omega_{G^{-1}dh}}(q))$. Consequently,

$$\text{sgn}\left( d\text{Ext}_{\Omega_{Gdh}}([\alpha_k])[\dot{\mu}_{\Omega_{Gdh}}] \right) = -\text{sgn}\left( d\text{Ext}_{\Omega_{G^{-1}dh}}([\alpha_k])[\dot{\mu}_{\Omega_{G^{-1}dh}}] \right).$$

\end{corollary}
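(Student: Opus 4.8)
The plan is to derive Corollary~\ref{sgnQuadraticDifferential} from Lemma 5.3.1 of \cite{Weber1998TeichmullerTA} in essentially the same way Weber and Wolf handle the zigzag case, using the admissibility of $\Phi_k$ on the selected edge established in the preceding subsection. First I would recall that the ``pushing out and pulling in'' map $f_\epsilon^E \circ f_{\epsilon^*}^E$ is built so that the horizontal edge $E$ is pushed out while the conjugate vertical edge $E^*$ is pulled in (and symmetrically across $y=-x$), so that the infinitesimal Beltrami differential $\dot\mu = \dot\nu + \dot\nu^*$ is supported near $E\cup E^*$ and, by construction, is real-valued with a definite sign relative to the flat structure along $E$. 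Because $\Phi_k$ is admissible on $E$ — meaning (by Proposition 5.3.2, applied via the angle/infinity conditions verified for the edge $P_{-1}P_{-2}$) that its horizontal foliation meets $E$ transversally in a controlled way and $\Phi_k$ does not vanish on the interior of $E$ — the product $\Phi_k\,\dot\mu_{\Omega_{Gdh}}(q)$ has constant sign for $q\in E$. This is exactly the hypothesis-conclusion format of Lemma 5.3.1.

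Next I would track how the sign flips when we pass from $\Omega_{Gdh}$ to $\Omega_{G^{-1}dh}$. The key point is that $E$ is horizontal in $\Omega_{Gdh}$ but the corresponding edge is vertical in $\Omega_{G^{-1}dh}$ (this is the defining feature of the conjugate ortho-disk pair coming from the symmetric tweezer: the exponents $a_j$ and $b_j$ differ by a sign pattern, so horizontal and vertical roles are interchanged). The ``pushing out'' on the horizontal side corresponds to ``pulling in'' on the vertical side, which is precisely why $\dot\mu_{\Omega_{G^{-1}dh}}$ carries the opposite orientation relative to the local flat structure. Combined with the fact that $\Phi_k$'s foliation is parallel/orthogonal to the tweezer on both sides, this yields
$$\operatorname{sgn}\big(\Phi_k\,\dot\mu_{\Omega_{Gdh}}(q)\big) = -\operatorname{sgn}\big(\Phi_k\,\dot\mu_{\Omega_{G^{-1}dh}}(q)\big), \qquad q\in E.$$
Then integrating over $E$ (where the support effectively lies) and invoking formula \eqref{eqn:derivativeofExtermal}, namely $(d\,\mathrm{Ext}([\alpha_k]))[\widehat\nu] = 4\,\mathrm{Re}\int \Phi_k\widehat\nu$, transfers the pointwise sign statement to the sign of the derivative of the extremal length functional, giving the displayed equality of signs for $d\,\mathrm{Ext}_{\Omega_{Gdh}}([\alpha_k])[\dot\mu_{\Omega_{Gdh}}]$ versus $d\,\mathrm{Ext}_{\Omega_{G^{-1}dh}}([\alpha_k])[\dot\mu_{\Omega_{G^{-1}dh}}]$.

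The main obstacle I anticipate is not the formal invocation of Lemma 5.3.1 but verifying that its hypotheses genuinely transfer to the tweezer setting — specifically, that for the chosen edge $P_{-1}P_{-2}$ the quadratic differential $\Phi_k$ is admissible on \emph{both} $\Omega_{Gdh}$ and $\Omega_{G^{-1}dh}$ simultaneously, and that the symmetry of $\Phi_k$ forces its horizontal foliation to be parallel or orthogonal to the tweezer edges. The angle at $P_{-1}$ is $\frac{\pi}{2}$ (condition (2) of the tweezer definition), which handles one endpoint cleanly via case (1) of the admissibility list; the other endpoint $P_{-2}$ requires checking whether it is at infinity, has angle $\frac{\pi}{2}$, or (if it has angle $\frac{3\pi}{2}$) whether the foliation is parallel/orthogonal there — and this is where the reflective symmetry argument of the previous subsection must be applied carefully. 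Once admissibility is secured, the sign-flip between the conjugate ortho-disks is a direct consequence of the horizontal/vertical interchange, so the rest is bookkeeping. I would therefore spend most of the proof ensuring the edge $P_{-1}P_{-2}$ satisfies the admissibility conditions on both ortho-disks and only briefly cite Lemma 5.3.1 for the conclusion.
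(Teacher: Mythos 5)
Your proposal is correct and follows essentially the same route as the paper: the paper gives no separate written proof of this corollary, but obtains it exactly as you do, by first verifying (via the angle/infinity conditions and Proposition 5.3.2 of Weber's Teichm\"uller-theoretic paper) that $\Phi_k$ is admissible on the selected edge $P_{-1}P_{-2}$, and then citing Lemma 5.3.1 of that reference, which packages the constant-sign statement for $\Phi_k\dot\mu$ on $E$, the opposite sign on the conjugate ortho-disk, and the transfer to $d\,\mathrm{Ext}$ via the formula $(d\,\mathrm{Ext}([\alpha_k]))[\widehat\nu]=4\,\mathrm{Re}\int\Phi_k\widehat\nu$. Your additional unpacking of the push-out/pull-in mechanism and the care about admissibility on both ortho-disks is consistent with what the cited lemma encapsulates, so no gap.
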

In the above, we are employing the same notation (a slight abuse of notation) for the curve $\alpha_k$ in both $\Omega_{Gdh}$ and $\Omega_{G^{-1}dh}$, as well as the same symbol for the corresponding horizontal foliation.    

The above corollary \ref{sgnQuadraticDifferential} is the main result that enables us to use the findings of zigzags, as presented in Section 5.3 of \cite{weber1998minimal}. Since the proof is identical, we can establish the following fact: 

Suppose there is a reflexive tweezer of genus $p-1$, then there is a path (one dimensional real analytic manifold) $\mathcal{Y} \subset \mathcal{T}_p$ for which $\text{Ext}_{\Omega_{Gdh}}(\alpha_i) = \text{Ext}_{\Omega_{G^{-1}dh}}(\alpha_i)$ holds for $i = 2 \ldots p-1$.

If we restrict $D^T$ as in the Equation \eqref{defn: DT} to $\mathcal{Y}$, we have 
\[
    D^T|_{\mathcal{Y}} = \left(\exp\left(\frac{1}{\text{E}_{\Omega_{Gdh}}(1)}\right) - \exp\left(\frac{1}{\text{E}_{\Omega_{G^{-1}dh}}(1)}\right)\right)^2 + \left(\text{E}_{\Omega_{Gdh}}(1) - \text{E}_{\Omega_{G^{-1}dh}}(1)\right)^2
\]

We denote the above-restricted function by the same $D^T$. 
 Moreover let $t_0\in Y$,   and let $\dot{\mu}\in T_{t_0}\mathcal{Y}$ corresponding to the edge we selected. We get

\begin{eqnarray*}
dD^T_{t_0}[\dot{\mu}] &=& 2\left(\exp\left(\frac{1}{\text{Ext}_{\Omega_{Gdh}}([\alpha_1])}\right) - \exp\left(\frac{1}{\text{Ext}_{\Omega_{G^{-1}dh}}([\alpha_1])}\right)\right) \\
&& \left(-\exp\left(\frac{1}{\text{Ext}_{\Omega_{Gdh}}([\alpha_1])}\right)(\text{Ext}_{\Omega_{Gdh}}([\alpha_1]))^{-2}d\text{Ext}_{\Omega_{Gdh}}([\alpha_1])[\dot{\mu}_{\Omega_{Gdh}}] \right.\\
&& \left. +\exp\left(\frac{1}{\text{Ext}_{\Omega_{G^{-1}dh}}([\alpha_1])}\right)(\text{Ext}_{\Omega_{G^{-1}dh}}([\alpha_1]))^{-2}d\text{Ext}_{\Omega_{G^{-1}dh}}([\alpha_1])[\dot{\mu}_{\Omega_{G^{-1}dh}}]\right)\\
&& + 2\left(\text{Ext}_{\Omega_{Gdh}}([\alpha_1]) - \text{Ext}_{\Omega_{G^{-1}dh}}([\alpha_1])\right)\\
&& \left(d\text{Ext}_{\Omega_{Gdh}}([\alpha_1])[\dot{\mu}_{\Omega_{Gdh}}] - d\text{Ext}_{\Omega_{G^{-1}dh}}([\alpha_1])[\dot{\mu}_{\Omega_{G^{-1}dh}}]\right).
\end{eqnarray*}

If we begin with $t_0\in Y$ such that $D^T(t_0)\neq 0$ and choose $\dot{\mu}$ as described in \ref{tangentvector} for the edge we selected at the beginning of this subsection.  The equation above implies that $dD^T_{t_0}[\dot{\mu}]\neq 0$. However, since $D^T$ is a proper map, it must have a critical point in the smooth manifold $\mathcal{Y}$; therefore, at this critical point $t_0$, we must have $D^T(t_0)=0$.

Now, finally, we prove that for every genus $p$, there exists a reflexive tweezer using an induction argument as outlined in \cite{weber1998minimal} (page 1165). To apply the induction argument for the case when $p=1$, it is evident that both the Riemann surfaces resulting from the zigzag and the tweezer are square tori. Therefore, for $p=1$, the existence of reflexive zigzag and tweezer is immediate, and then we can proceed with the induction argument.

\section{Analyzing the singularities}

As described in Theorem \ref{prop:maxfacefromZigzag} and Theorem \ref{thm:surfaceFromTweezer}, we have 
two types of maxfaces with Enneper end. The aim in this section is to understand the singularities of these 
surfaces. Although the Weierstrass data of these maxfaces are not explicitly given, they depend on the 
existence of a reflexive zigzag and tweezer.
For maxfaces, the singular set is $\{p\,\in\, M\,\, \big\vert\,\, |g(p)|\,=\,1\}$. 
Therefore, we start by revisiting the Gauss map.

\subsection{The Gauss Map and the 1-form}
For the case of a symmetric zigzag (respectively tweezer) of genus $p$, we recall that $\Omega_{NE}$ (respectively $\Omega_{Gdh}$) is an ortho-disk, defined as $\mathbb{H}\cup\mathbb{R}$ equipped with the pullback of the flat metric on $\mathbb{C}$ by the respective Schwarz-Christoffel map $F$. This Schwarz-Christoffel map is defined as follows:

\begin{equation}\label{SCmapZigzag}
F(z) \,=\, \int_{i}^{z} \prod_{j=-p}^{p} (t-t_j)^{a_j} dt.
\end{equation}
Here, $a_j$ takes the values $\pm\frac{1}{2}$. For the case of the zigzag, it alternates in value, starting from $a_{-p} = -\frac{1}{2}$. Moreover, for the tweezer, it does not alternate; instead, there are three consecutive points where the value of $a_j$ is the same.  

In the remaining section, we will be using $\Omega_{Gdh}$ for the ortho-disk obtained from either the tweezer or the zigzag.  Now, let $\{\mathcal{R}_{\Omega_{Gdh}}\setminus\{P_{\infty}\},\, g,\, dh\}$ be a 
maxface either as described in Theorem \ref{prop:maxfacefromZigzag} or in Theorem \ref{thm:surfaceFromTweezer}. 


As in the previous section, denote by $S_{\Omega_{G dh}}$ the double of $\Omega_{G dh}$ with marked points $t_j$ and puncture at $\infty$.  Consider the  hyperelliptic double covering map $\pi\,:\, \mathcal{R}_{NE}\,\longrightarrow\, S_{\Omega_{G dh}}$ whose branched points  are $\{t_j\}$ and $\infty$. On $\pi^{-1}(\Omega_{\Omega_{G dh}})$, the Gauss map $g$ and a $1$-form $dh$ are given by the 
following expressions:

\begin{eqnarray*}
    g(z,w) &\,\,=\,\,&\frac{e^{\pm\frac{ i \pi }{4}}}{c}\prod_{j=-p}^{p}(z-t_j)^{a_j}\\
    {dh}&\,\,=\,\,& cdz.
\end{eqnarray*}

We do not have an explicit knowledge of $t_j$ and $c$; their existence is linked with
the existence of the reflexive zigzag. The Gauss map $g$ and $dh$ can be extended to $\mathcal{R}_{\Omega_{G dh}}$
by using the symmetries: rotation and reflections. Therefore, if we determine the singular set in this component, we will gain insights into the full singular set on $\mathcal{R}_{\Omega_{G dh}}$  of the maxface by applying reflections and rotations on it. 

The singular set in the components $\Omega_{{G dh}}$ and its mirror image with respect to the real axis
are given as follows:
\begin{equation}\label{Singular set}
Sing_{\geq} \,:=\, \lbrace (z,\,w)\,\,\big\vert\,\, \text{Im}(z) \,\geq\, 0,\;\;\text{and}\;\;
\prod_{j=-p}^{p}\left\lvert z-t_j\right\rvert^{a_j} \,=\, \left\lvert c\right\rvert\rbrace
\end{equation}
\begin{equation}
Sing_{\leq} \,:=\, \lbrace (z,\,w)\,\,\big\vert\,\, \text{Im}(z)\,\leq\, 0,\;\;\text{and}\;\;
\prod_{j=-p}^{p} \left\vert z-t_j\right\rvert^{a_j} \,=\, \left\lvert c\right\rvert\rbrace.
\end{equation}
Denote by $S$ the union $S_{\geq}\cup S_{\leq}.$  The singular set of maxfaces in $\mathcal{R}_{\Omega_{G dh}}$ is $\pi^{-1}(S)$. To
understand $S$ and $\pi^{-1}(S)$, we introduce the function:
\begin{equation}\label{eqn:G}
G\,:\, \mathbb{C}\cup\lbrace \infty\rbrace \,\longrightarrow\, \mathbb{C}\cup\lbrace \infty\rbrace,\,\ \
z\, \longmapsto\, \frac{\prod_{j=-p}^{p}( z-t_j)^{2a_j}}{c^2}.
\end{equation}

It is evident that if we set $S_1 \,= \,G^{-1}\{|z|=1\}$, then $S_1 \,=\, S
\,=\, \text{Sing}_{\geq} \cup \text{Sing}_{\leq}$. The poles of $G$ are of order 1, so poles are not
ramification points. Therefore, $G$ is a branched covering with ramification points in 
the zeros of $G^\prime$; the locus of the zeros of $G^\prime$ is denoted by $Z(G^\prime)$.

In general, the structure of the singular set depends on the values of 
$t_j$ and $c$. After experimenting with various values of $c$ and $t_j$ in Mathematica, it appears 
that the singular set is a collection of topological circles, sometimes disjoint and at other times 
intersecting. In Figure \ref{fig:singularysetIDEA}, we can see various possibilities.

\begin{figure}[h]
    \centering
    \def\svgwidth{0.45\columnwidth}
\begingroup%
  \makeatletter%
  \providecommand\color[2][]{%
    \errmessage{(Inkscape) Color is used for the text in Inkscape, but the package 'color.sty' is not loaded}%
    \renewcommand\color[2][]{}%
  }%
  \providecommand\transparent[1]{%
    \errmessage{(Inkscape) Transparency is used (non-zero) for the text in Inkscape, but the package 'transparent.sty' is not loaded}%
    \renewcommand\transparent[1]{}%
  }%
  \providecommand\rotatebox[2]{#2}%
  \newcommand*\fsize{\dimexpr\f@size pt\relax}%
  \newcommand*\lineheight[1]{\fontsize{\fsize}{#1\fsize}\selectfont}%
  \ifx\svgwidth\undefined%
    \setlength{\unitlength}{360bp}%
    \ifx\svgscale\undefined%
      \relax%
    \else%
      \setlength{\unitlength}{\unitlength * \real{\svgscale}}%
    \fi%
  \else%
    \setlength{\unitlength}{\svgwidth}%
  \fi%
  \global\let\svgwidth\undefined%
  \global\let\svgscale\undefined%
  \makeatother%
  \begin{picture}(1,1.00277778)%
    \lineheight{1}%
    \setlength\tabcolsep{0pt}%
    \put(0,0){\includegraphics[width=\unitlength,page=1]{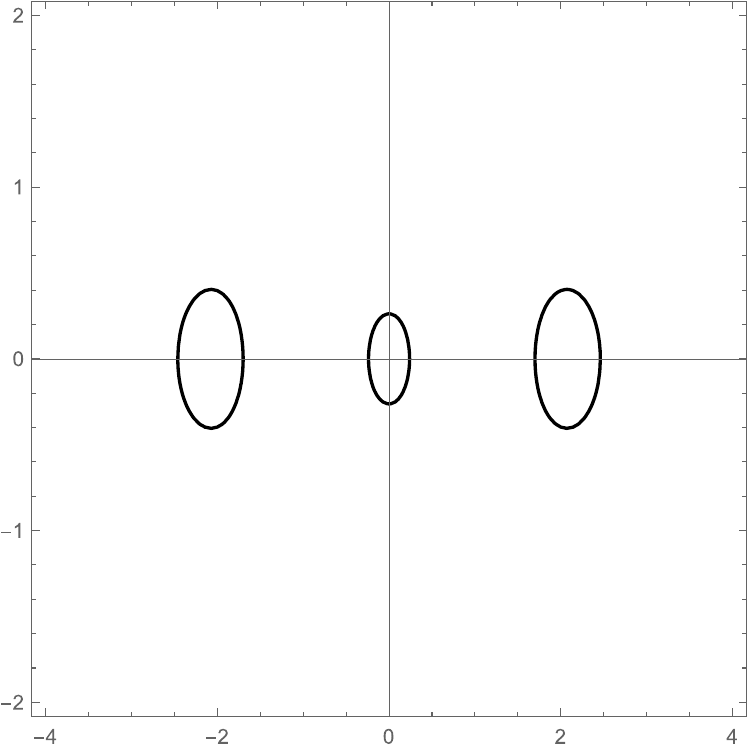}}%
    \put(0.18477287,0.15975155){\color[rgb]{0,0,0}\transparent{0.99581599}\makebox(0,0)[lt]{\lineheight{1.25}\smash{\begin{tabular}[t]{l}$|z^2 - 1|=|z (z^2 - 4)|$\end{tabular}}}}%
  \end{picture}%
\endgroup%

, %
    \def\svgwidth{0.45\columnwidth}
\begingroup%
  \makeatletter%
  \providecommand\color[2][]{%
    \errmessage{(Inkscape) Color is used for the text in Inkscape, but the package 'color.sty' is not loaded}%
    \renewcommand\color[2][]{}%
  }%
  \providecommand\transparent[1]{%
    \errmessage{(Inkscape) Transparency is used (non-zero) for the text in Inkscape, but the package 'transparent.sty' is not loaded}%
    \renewcommand\transparent[1]{}%
  }%
  \providecommand\rotatebox[2]{#2}%
  \newcommand*\fsize{\dimexpr\f@size pt\relax}%
  \newcommand*\lineheight[1]{\fontsize{\fsize}{#1\fsize}\selectfont}%
  \ifx\svgwidth\undefined%
    \setlength{\unitlength}{360bp}%
    \ifx\svgscale\undefined%
      \relax%
    \else%
      \setlength{\unitlength}{\unitlength * \real{\svgscale}}%
    \fi%
  \else%
    \setlength{\unitlength}{\svgwidth}%
  \fi%
  \global\let\svgwidth\undefined%
  \global\let\svgscale\undefined%
  \makeatother%
  \begin{picture}(1,1.00277778)%
    \lineheight{1}%
    \setlength\tabcolsep{0pt}%
    \put(0,0){\includegraphics[width=\unitlength,page=1]{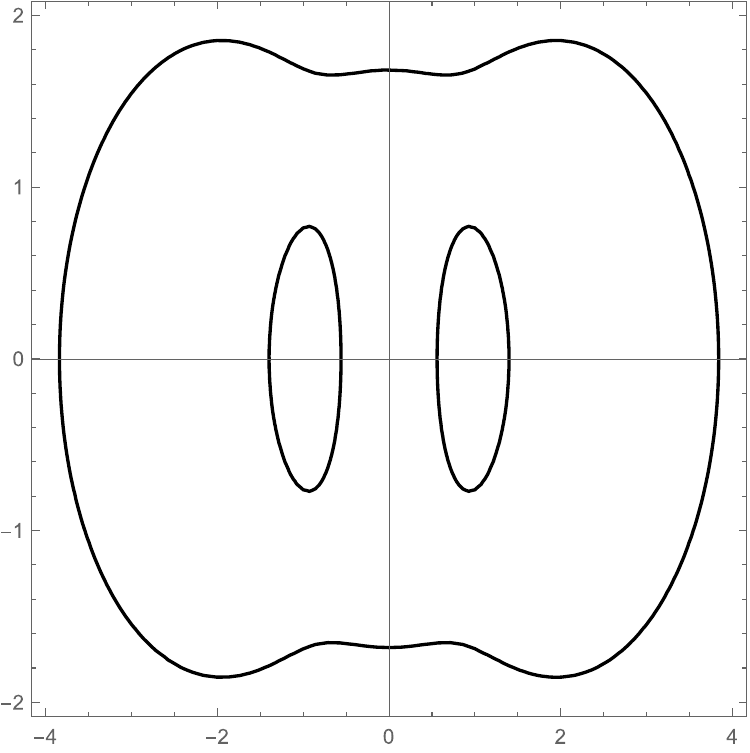}}%
    \put(0.11337999,0.06533856){\color[rgb]{0,0,0}\transparent{0.99581599}\makebox(0,0)[lt]{\lineheight{1.25}\smash{\begin{tabular}[t]{l}$3|z^2 - 1|=|z (z^2 - 4)|$\end{tabular}}}}%
  \end{picture}%
\endgroup%

    \caption{The singularity set may be topological circles or concentric circles, intersecting or disjoint.}
    \label{fig:singularysetIDEA}
\end{figure}

In the following, we write the general things that we can say: 
\begin{prop}\label{Prop:SingularsetONRNE}
   For genus $p$ maxfaces from the zigzag (as described in Theorem \ref{prop:maxfacefromZigzag}), 
if $G^\prime(z)\,\neq\, 0$ on the singular set, then the singular set of the maxface on the defining Riemann surface consists of  $(p+1)$ disjoint loops.
\end{prop}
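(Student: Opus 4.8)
The plan is to analyze the singular set $\pi^{-1}(S)$ as a covering-space phenomenon, using the branched cover $G\colon \mathbb{C}\cup\{\infty\}\to\mathbb{C}\cup\{\infty\}$ from \eqref{eqn:G} together with the hyperelliptic cover $\pi$. First I would work downstairs: since $S_1 = G^{-1}\{|z|=1\}$ and we are assuming $G'\neq 0$ on $S_1$, the preimage of the unit circle $\{|z|=1\}$ is a smooth one-dimensional submanifold of $\mathbb{C}\cup\{\infty\}$, hence a disjoint union of finitely many embedded circles. To count these circles I would determine the degree of $G$: from the divisor data for the zigzag (the exponents $a_j = \pm\frac12$ alternating, with the product having a pole of order $2$ at $\infty$, matching $(\alpha)(\beta)/dh^2$ considerations already recorded in Section 3), the map $G$ has degree $p$ as a branched cover of $\mathbb{P}^1$. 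Restricting $G$ over the open unit disk $\mathbb{D}$, which contains no branch values of $G$ under the hypothesis that $G'\neq 0$ on $S_1$ — here one also needs that no critical \emph{values} of $G$ lie on $\{|z|=1\}$, which is exactly what $G'\neq 0$ on $S_1$ guarantees — we get an unbranched cover $G^{-1}(\mathbb{D})\to\mathbb{D}$ of degree $p$, so $G^{-1}(\mathbb{D})$ is a disjoint union of $p$ disks, and correspondingly $G^{-1}\{|z|=1\}=S$ is a disjoint union of $p$ circles.

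Next I would lift through $\pi$. The hyperelliptic involution and the branch points $\{t_j\}\cup\{\infty\}$ all lie on $\mathbb{R}\cup\{\infty\}$, which is the fixed locus of complex conjugation, whereas $S$ is symmetric under $z\mapsto\bar z$ (since $|G(\bar z)| = |G(z)|$ as the $t_j$ are real and $c$ can be taken so that $|G|$ is conjugation-symmetric). The key point is to decide, for each of the $p$ component circles of $S$, whether it contains a branch point of $\pi$ (equivalently, meets $\mathbb{R}\cup\{\infty\}$ at a $t_j$ or at $\infty$) and hence lifts to a single circle upstairs, or whether it avoids all branch points and lifts to two disjoint circles. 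Since $|G|\to\infty$ at $\infty$ and $G$ vanishes exactly at the finite $t_j$ with $a_j>0$, the circle $\{|z|=1\}$ downstairs encloses some $t_j$'s; tracking which $t_j$ lie inside $\mathbb{D}$ versus outside, and pairing each component of $S$ with the subset of $\{t_j\}$ it separates, I expect to find that $p-1$ of the $p$ circles do pass through a pair of branch points (so each lifts to one circle) while the remaining configuration forces two of the downstairs circles to either coincide after lifting or to produce the extra loop — a careful bookkeeping with Riemann–Hurwitz applied to $\pi|_{\pi^{-1}(S)}$ should yield exactly $(p+1)$ loops upstairs. Concretely: if $S$ has $c_0$ circles meeting the branch locus of $\pi$ and $c_1$ circles avoiding it, with $c_0+c_1=p$, then $\pi^{-1}(S)$ has $c_0 + 2c_1$ components, and the claim is equivalent to $c_1 = 1$, i.e.\ exactly one component circle of $S$ in $\mathbb{H}\cup\mathbb{R}$ avoids the $t_j$. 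I would establish $c_1=1$ by a connectedness/separation argument: the unit circle downstairs, being a single circle, separates the $t_j$ with $a_j>0$ from $\infty$; pulling back, each nested circle of $S$ separates consecutive groups of branch points, and only the outermost (or a single distinguished) one can fail to meet $\{t_j\}$.

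The main obstacle I anticipate is precisely the combinatorial/topological step identifying $c_1=1$: it requires knowing how the $p$ component circles of $G^{-1}\{|z|=1\}$ are nested in $\mathbb{C}$ relative to the real points $t_j$, and this depends on the (only implicitly known) positions of the $t_j$ and the constant $c$. I would handle this by exploiting the structure of $\Omega_{Gdh}$ as an ortho-disk: under the Schwarz–Christoffel map, the real intervals between consecutive $t_j$ map to the edges of the zigzag, whose angles alternate $\tfrac\pi2,\tfrac{3\pi}2,\ldots$, and the sign pattern of $\log|G|$ along $\mathbb{R}$ (which changes at each $t_j$) is rigidly controlled by the alternating exponents $a_j=\pm\tfrac12$. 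A sign-change count of $\log|G| = \sum a_j\log|z-t_j| - \log|c|$ along the real axis then pins down how many component circles of $S$ cross $\mathbb{R}$ and between which pairs of $t_j$, and the "$+1$" in $p+1$ comes from the behavior at the puncture $P_\infty$ where $\widetilde g$ has a zero (so $|G|\to 0$ or $\infty$ there, contributing one loop that encircles the end region). Finally I would remark that the hypothesis $G'\neq 0$ on the singular set is exactly what rules out the self-intersecting/tangential configurations visible in Figure~\ref{fig:singularysetIDEA}, ensuring all components are smooth embedded circles rather than figure-eights.
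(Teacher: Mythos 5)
Your proposal has several genuine gaps, the first of which is a degree miscount that propagates through the whole argument. With the alternating exponents $2a_j=\pm 1$ starting and ending at $-1$, the rational map $G$ has a numerator of degree $p$ and a denominator of degree $p+1$, so $\deg G=p+1$, not $p$; correspondingly $G$ has $p+1$ zeros (the $p$ finite ones plus $\infty$) and $p+1$ poles. Second, your claim that $G^{-1}(\mathbb{D})\to\mathbb{D}$ is an \emph{unbranched} cover does not follow from the hypothesis: $G'\neq 0$ on the singular set only excludes critical values \emph{on} the unit circle, not in its interior, and nothing prevents critical values of $G$ from lying inside $\mathbb{D}$. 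So ``$G^{-1}(\mathbb{D})$ is a disjoint union of $p$ disks'' is unjustified; the hypothesis only yields the upper bound (number of components of $G^{-1}\{|z|=1\}$) $\leq\deg G=p+1$, and the matching lower bound is exactly the content that still has to be proved. Third, your lifting criterion for the hyperelliptic cover is wrong: no component of $S$ can pass through a branch point of $\pi$ (at each $t_j$ and at $\infty$ one has $|G|=0$ or $\infty$, never $1$), so by your stated dichotomy every circle would lift to two disjoint circles and you would get $2p$ components, contradicting the claim. The correct criterion is the mod-$2$ monodromy: a loop avoiding the branch locus lifts to a single connected loop precisely when it encloses an \emph{odd} number of branch points, and to two disjoint loops when it encloses an even number.

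The paper's argument is more elementary and closes all of these gaps at once. Because the zeros and poles of $G$ alternate along $\mathbb{R}$ (and $G(\infty)=0$), the function $|G|-1$ changes sign on each of the $2p$ bounded intervals $(t_j,t_{j+1})$ and on both unbounded intervals, giving at least $2(p+1)$ real points of $S$; clearing denominators in $\prod_j|x-t_j|^{2a_j}=|c|^2$ bounds the number of real solutions by the degree $2p+2$, so there are exactly $2p+2$. Since $S$ is symmetric under conjugation, each component meets $\mathbb{R}$ in at least two points, and since there are at most $p+1$ components (degree bound), there are exactly $p+1$, each crossing $\mathbb{R}$ twice and hence encircling exactly one zero or one pole of $G$, i.e.\ exactly one branch point of $\pi$. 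By the odd-monodromy criterion each such loop lifts to a single connected loop, giving $p+1$ disjoint loops upstairs. If you want to salvage your covering-space framing, you would need to replace the unbranched-cover step by this real-axis sign-change count and replace your lifting dichotomy by the enclosed-branch-point parity.
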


\begin{proof}
As $G^\prime \,\neq\, 0$ on the singular set, the connected component of the preimage of $\{|z|\,=\,1\}$ is a 
smooth one-dimensional object and these connected components are non-intersecting.

Define $G_1$ on $(t_j,t_{j+1})$ as $G_1(x) \,:=\, |G(x)| - 1$.  For the zigzag case, it is important to note that $t_{-j} = -t_j$ and the sign of $a_j$ alternates.  Therefore, if  $t_j$ is a pole of $G$, then $t_{j+1}$ is 
zero, or vice versa. Therefore, between $t_j$ and $t_{j+1}$, there exists a real number such that $G_1(z) \,=\, 
0$. Consequently, there are at least $2(p+1)$ real points $r_i$ that are singular points. Moreover, 
$\lbrace x\,\in\,\R\vert G_1(x)\,=\,0\rbrace$ is contained in the solution set of a $2p+2$ degree real polynomial; 
thus, there are exactly $2p+2$ real points, which are singular points\color{black}. We take preimages  of 
the loop $\{|z|\,=\,1\}$ starting from $r_i$, denoting these preimages as $\gamma_i$. The traces of these 
$\gamma_i$ are part of the singular set.

Since the degree of $g$ is $p+1$, we cannot have more than $p+1$ disjoint components of the singular set. 
Therefore, each $\gamma_i$ intersects at least one $\gamma_j$, forming part of the same connected 
component.  Moreover, each connected component is one-dimensional and symmetric about the $X$-axis; it must 
be a closed loop.

All such loops are $p+1$ in number within the double of $\Omega_{{G dh}}$. Moreover, these loops either 
encircle only one zero of $G$ or only one the pole of $G$. Thus, when we lift these singular locus in the hyperelliptic cover 
$\mathcal{R}_{\Omega_{G dh}}$, we obtain exactly $p+1$ disjoint singular locus.
\end{proof}

We refer to the loops that constitute connected components of singularities as singular locus. The crucial point 
in the proof of the above proposition is that we have used the poles and zeros of $G$ in the case of zigzag 
alternates. However, in the case of a tweezer, this is not applicable. For the case of the reflexive tweezer, where there are exactly three points where the sign of $a_j$ does not alternate, we can apply the same argument as above to conclude that there will be at least $p-1$ disjoint singular locus when $p\geq 2$. When $p=1$, both the maxface from the tweezer and zigzags are the same.

\begin{figure}[h]
    \centering
    \includegraphics[scale=0.56]{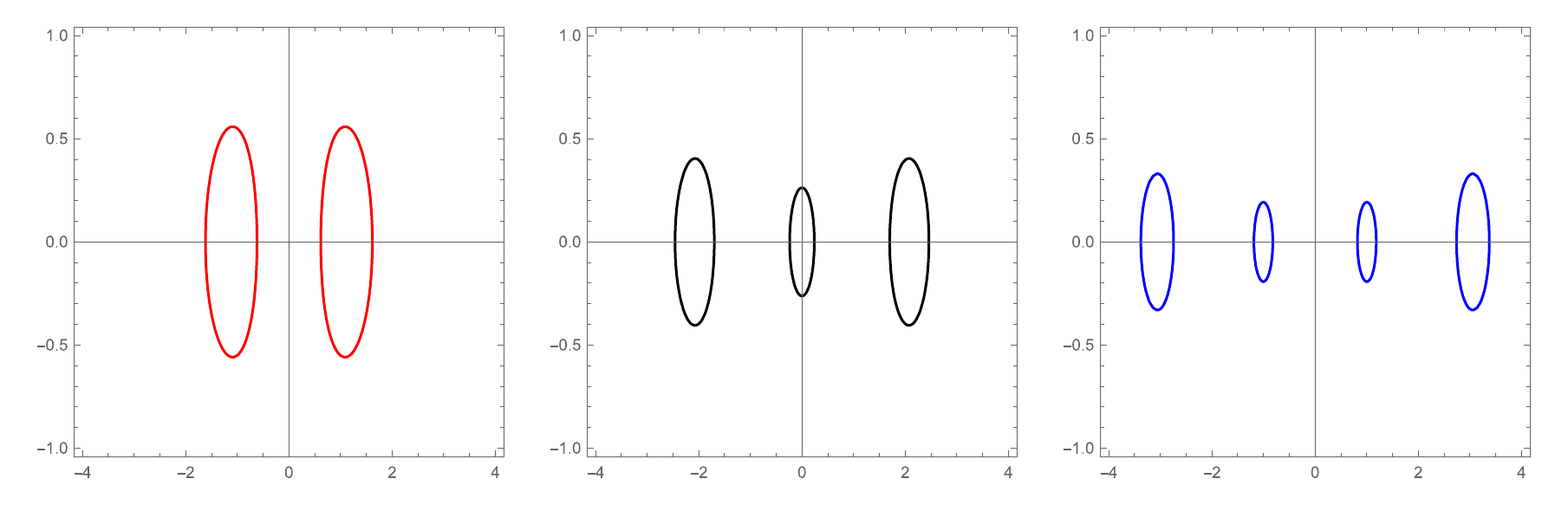}
    \caption{Singularity set in $S_{\Omega_{G dh}}$ for the case of zigzag, when $c=1$ and points $t_j=j$, with examples for genus 1, genus 2, and genus 3.
 }
    \label{fig:disjointsingularcurve}
\end{figure}

\subsection{The nature of singularities}

The nature of the singular points is characterized by three functions.
They are $\mathrm{A}(z)\,=\,\frac{g^{\prime}}{gf}$,\, $B(z)\,=\,\frac{g}{g^{\prime}}A^{\prime}$, and
$E(z)\,=\,\frac{g}{g^{\prime}}B^{\prime}$.   We recall the criterion \cite{UMEHARA2006,SaiPradip} to check the nature of singularity in the Table \ref{tab:Singuarties: in terms of }: 
\begin{table}[h]
\begin{tabular}{|c c c c c|}
\hline
  Re$(A)\neq 0$ & Im$(A)\neq 0$ & & &\hspace{-1cm}$\Leftrightarrow p$ is a cuspidal-edge\\\hline
  Re$(A)\neq 0$ &Im$(A)= 0$ & Re $(B)\neq 0$& &\hspace{-1cm}$\Leftrightarrow p$ is a swallowtail\\\hline
Re$(A)\neq 0$ & Im$(A)= 0$ & Re $(B)= 0$ &Im$(E)\neq 0$&$\Leftrightarrow p$ is a cuspidal butterfly\\\hline
Re$(A)\neq 0$ & Im$(A)= 0$ & Re $(B)= 0$ &Im$(E)= 0$&$\Leftrightarrow p$ is a special singular point of type 1\\\hline
Re$(A)=0$ & Im$(A)\neq 0$ & Im$(B)= 0$ &Re$(E)\neq 0$&$\Leftrightarrow p$ is a cuspidal $S_1^-$\\\hline
Re$(A)=0$ & Im$(A)\neq 0$ & Im$(B)= 0$ &Re$(E)=0$&$\Leftrightarrow p$ is a special singular point of type 2\\\hline
Re$(A)= 0$ & Im$(A)\neq 0$ & Im $(B)\neq 0$& &$\Leftrightarrow p$ is a cuspidal crosscap\\\hline
\end{tabular}
\caption{Singularities: in terms of $A$,$B$, and $E$}
\label{tab:Singuarties: in terms of }
\end{table}

In our situation, we have $A(z)\,=\, \frac{g^\prime(z)}{g(z)f(z)}\,=\, \frac{g^\prime}{g c}.$   On the singular set, when $g(z)\neq 0$,  we have $A(z)\,=\,
\frac{G^\prime(z)}{2g(z)}.\frac{1}{g(z).c}\,=\, \frac{G^\prime(z)}{2c.G(z)}.$ 
We will discuss the case where all $z_0\,\in\, S$ (singular set) satisfy  $G^\prime(z_0)\,\neq\, 0$;
this implies that $A\,\neq\, 0$ on the singular set. 

Therefore, for each point in the singular set, either $\mathrm{Re}(A)\,\neq \,0$ or $\mathrm{Im}(A)\,
\neq\, 0$.  

\subsubsection{\textbf{$\mathrm{Re}(A)$} and $\mathrm{Im}(A)$}

Let $z$ be a solution such that \( \mathrm{Re}(A(z)) \,=\, 0 \). Since \( A(z) \,=\,
\frac{G^\prime(z)}{2c\cdot G(z)} \,=\, \frac{1}{2c}\left( \sum_{k=1}^p\frac{ 4\; a_k\; z}{(z^2-t_k^2)}+
\frac{2a_0}{z}\right) \)\color{black}, we have \( \mathrm{Re}(A)(z) \,=\, 0 \) if and only if the
following equation is satisfied:
\begin{equation}\label{ReAlpha}
H(z) \,:=\, \mathrm{Re} \left( \frac{1}{2c} \left(\sum_{k=1}^p\frac{ 4\; a_k\; z}{(z^2-t_k^2)}+
\frac{2a_0}{z}\right)\right) \,=\, 0.
\end{equation}

Take \( H_1(z) \,=\, -\frac{1}{2c} \left( \sum_{k=1}^p\frac{ 4\; a_k\; z}{(z^2-t_k^2)}+
\frac{2a_0}{z} \right) \); it is a function from \( \mathbb{C}\cup\{\infty\}\) to itself.
Let \( C_k \) be the loop in the singular locus that encircles \( t_k \). Moreover, assume that \( t_k \) is
a zero of \( G \) (the same analysis holds when $t_k$ is the pole). We wish to show that the cardinality of the
intersection satisfies the following:
\[ |C_k \cap H_1^{-1}\left(\{iy\,\big\vert\,\, y\in\mathbb{R}\}\cup\{\infty\}\right)| \,\geq\, 2. \]
It is clear that \( H_1 \) takes \( t_k \) to \( \infty \) and \( \infty \) to \( 0 \). This means that
the preimage should contain \( \infty \), which implies that a part of the connected component of
$H_1^{-1}\left(\{iy\,\big\vert\,\, y\in\mathbb{R}\}\cup\{\infty\}\right)$ should lie outside the loop. On
the other hand, \( t_k \), which is inside the loop, also lies in the preimage. This means that a connected
component of the preimage will intersect \( C_k \) at least once. By symmetry, it will intersect
$C_k$ atleast twice in the double of the \( \Omega_{\Omega_{G dh}} \). Therefore, we have
\( |C_k \cap H_1^{-1}\left(\{iy\,:\,\, y\in\mathbb{R}\}\cup\{\infty\}\right)| \,\geq\, 2 \).

Similarly, we can prove that there are at least two points on $C_k$ where $Im(A(z))$ is not zero.

We recall that the data for the zigzag is given by \((c,t_1,\,\cdots,\,t_p)\) with \(t_{-j}\,=\,-t_j\). Similarly, 
if we assume that the data for the tweezers also satisfies the condition \(t_{-j}\,=\,-t_j\), then the above 
analysis holds for the tweezer case.

Combining all these, we have the following proposition.

\begin{prop}\label{propSigng3}
For the maxface from the zigzag, or tweezer, let \(G\) be such that \(G^\prime \,\neq\, 0\) on any singular loop \(C_k\)
in the singular locus. Then, on each \(C_k\), we have:
\begin{enumerate}
    \item There are at least two points on \(C_k\) such that \(\mathrm{Re}(A)(z)\, =\, 0\)
and \(\mathrm{Im}(A)(z) \,\neq\, 0\).

\item There are at least two points on \(C_k\) such that \(\mathrm{Re}(A)(z) \,\neq\,
 0\) and \(\mathrm{Im}(A)(z) \,= \,0\).
\end{enumerate}
Furthermore, if the tweezer has data \((c,\,t_1,\,\cdots,\,t_p)\) with \(t_{-j}\,=\,-t_j\), then both
(1) and (2) above hold true for the maxface from the tweezers as well.
\end{prop}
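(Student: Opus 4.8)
The plan is to reduce both items to a single count on each singular loop. On any $C_k$ we have $|G|=1$ (hence $G\neq 0,\infty$) and, by hypothesis, $G'\neq 0$, so
$A=\frac{G'}{2cG}=\frac{1}{c}\sum_{j=-p}^{p}\frac{a_j}{z-t_j}$ is finite and non-zero along $C_k$. Hence a point of $C_k$ with $\mathrm{Re}(A)=0$ automatically has $\mathrm{Im}(A)\neq 0$ (else $A=0$), and symmetrically; so it suffices to exhibit at least two points of $C_k$ with $\mathrm{Re}(A)=0$ and at least two with $\mathrm{Im}(A)=0$. The idea already used above suggests looking at $H_1=-A$ as a rational self-map of $\mathbb{C}\cup\{\infty\}$: since $t_k$ is a simple zero or pole of $G$ it is a simple pole of $A$, so $H_1(t_k)=\infty$, while $A(\infty)=0$ gives $H_1(\infty)=0$, and both $\infty$ and $0$ lie on the circle $\Gamma:=\{iy:y\in\mathbb{R}\}\cup\{\infty\}$; thus $H_1^{-1}(\Gamma)=\{\mathrm{Re}(A)=0\}$ contains $t_k$ (interior to the disk $D_k$ bounded by $C_k$) and $\infty$ (exterior to $D_k$), which ``should'' force it across $C_k$, and twice by the reflection symmetry of the doubled picture. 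I would, however, not leave the count at this level, because the nodal branch through $t_k$ might close up inside $D_k$ and the nodal set $\{\mathrm{Re}(A)=0\}$ need not itself be invariant under $z\mapsto\bar z$.

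To make the count robust I would run it on the boundary curve. Because $G'\neq 0$ on $C_k$ and $|G|\equiv 1$ there, $|\nabla|G|^2|=2|G'||G|\neq 0$, so $C_k$ is a smooth embedded Jordan curve; parametrize it by arclength $s$ with unit tangent $e^{i\chi(s)}$ and write $G=e^{i\psi(s)}$ along it. From $\frac{d}{ds}\log G(z(s))=\frac{G'}{G}e^{i\chi(s)}=i\psi'(s)$ one gets $A|_{C_k}(s)=\frac{\psi'(s)}{2|c|}\,e^{\,i(\pi/2-\arg c-\chi(s))}$, hence
\[
\mathrm{Re}(A)(s)=\frac{\psi'(s)}{2|c|}\sin(\chi(s)+\arg c),\qquad \mathrm{Im}(A)(s)=\frac{\psi'(s)}{2|c|}\cos(\chi(s)+\arg c).
\]
By the theorem on turning tangents $\chi$ increases by $\pm 2\pi$ over one circuit, so $s\mapsto\chi(s)\bmod\pi$ is a degree $\pm 2$ map $C_k\to\mathbb{R}/\pi\mathbb{Z}$, and a circle map of degree $\pm 2$ attains every value at least twice; applying this to the values $-\arg c$ and $\tfrac{\pi}{2}-\arg c$ produces at least two points of $C_k$ with $\mathrm{Re}(A)=0$ and at least two with $\mathrm{Im}(A)=0$ respectively. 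At each such point $\psi'(s)\neq 0$ (otherwise $A=0$ there, contradicting $G'\neq 0$), so the complementary real/imaginary part is non-zero, which is exactly statements (1) and (2).

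For a reflexive tweezer whose data satisfies $t_{-j}=-t_j$, every ingredient used is literally unchanged — the expression $A=\frac{1}{c}\sum_j\frac{a_j}{z-t_j}$, the relations $A(t_k)=\infty$ and $A(\infty)=0$, the smoothness of each singular loop from $G'\neq 0$, and the tangent winding $\pm 2\pi$ — so the argument transfers verbatim, giving (1) and (2) for the tweezer maxface as well. The one genuinely delicate point is the step that in the $H_1$-preimage picture seems ``obvious'' — that the zero locus of $\mathrm{Re}(A)$ issuing from $t_k$ must leave $D_k$ (and do so an even number of times) — and this is precisely why I would route the proof through the arclength identity above: it replaces that topological assertion by the robust combination of Hopf's Umlaufsatz and the hypothesis $G'\neq 0$ on $C_k$.
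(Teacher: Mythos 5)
Your proof is correct, and it takes a genuinely different route from the paper's. The paper works with the rational self-map $H_1=-A$ of the sphere: since $H_1(t_k)=\infty$ and $H_1(\infty)=0$ both lie on the extended imaginary axis, the preimage $\{\mathrm{Re}(A)=0\}$ contains a point inside $C_k$ (namely $t_k$) and a point outside (namely $\infty$), from which the paper concludes that a connected component of this preimage must cross $C_k$, and the count is then doubled by the reflection symmetry of the doubled ortho-disk (this is also where the hypothesis $t_{-j}=-t_j$ enters for the tweezer). You instead restrict $A$ to the loop itself: using $|G|\equiv 1$ on $C_k$ you write $\mathrm{Re}(A)=\frac{\psi'}{2|c|}\sin(\chi+\arg c)$ and $\mathrm{Im}(A)=\frac{\psi'}{2|c|}\cos(\chi+\arg c)$ along the curve, and the count of two drops out of Hopf's Umlaufsatz applied to the tangent angle $\chi$, with $\psi'\neq 0$ (equivalently $A\neq 0$, i.e.\ $G'\neq 0$) guaranteeing the complementary part is nonzero. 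Your version buys three things: it closes the soft spot you correctly flag in the separation argument (nothing in the paper's wording rules out the nodal branch through $t_k$ closing up inside the disk bounded by $C_k$, and since $A(\bar z)=\tfrac{\bar c}{c}\,\overline{A(z)}$ the nodal set of $\mathrm{Re}(A)$ is conjugation-invariant only when $c$ is real, so the ``by symmetry'' doubling also deserves care); the factor of two comes from the turning number rather than from reflection symmetry, so the condition $t_{-j}=-t_j$ plays no role in this counting step for the tweezer (it is still needed upstream, in Proposition \ref{Prop:SingularsetONRNE}, to produce the loops); and your opening reduction --- $A\neq 0$ on $C_k$ forces $\mathrm{Im}(A)\neq 0$ wherever $\mathrm{Re}(A)=0$ and vice versa --- matches the observation the paper makes just before the proposition. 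What the paper's route buys in exchange is reusability: the same preimage machinery is applied verbatim to $B$ in the following subsection, whereas your arclength identity is special to $A$ being the logarithmic derivative $G'/(2cG)$, so the analysis of $B$ would still have to proceed as in the paper.
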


\begin{figure}[h]
    \centering
    \includegraphics[scale=0.8]{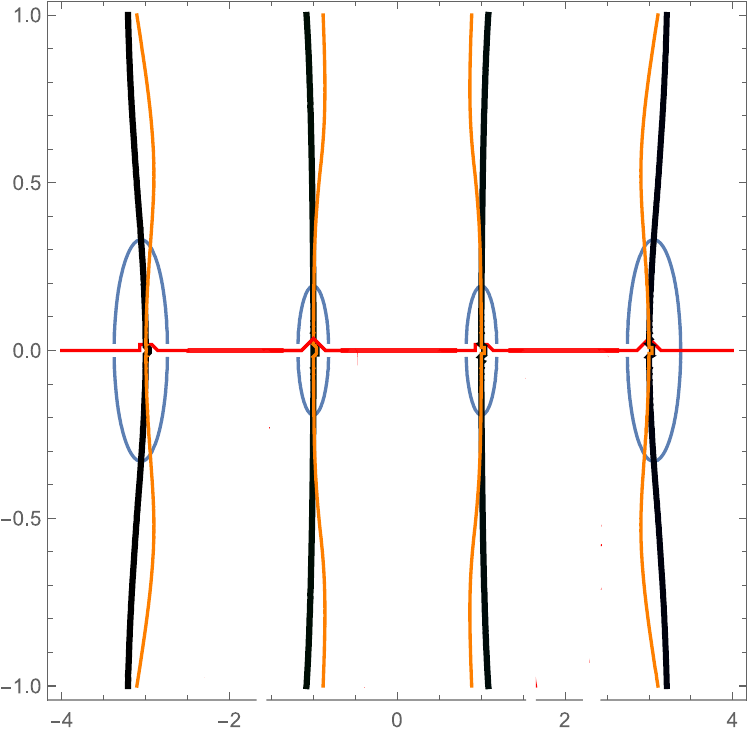}
    \caption{For genus three, when $t_1=1$, $t_2=2$, $t_3=3$, and $c=1$. Blue topological circles represent a singular locus.  The red curves show the points where $\mathrm{Im}(A)=0$, the black curves represent where $\mathrm{Re}(A)=0$, and the orange curves are where $\mathrm{Re}(B)=0$.}
    \label{fig:figlabel}
\end{figure}

\subsubsection{Re($B$) and Im($B$)}
We calculate $B(z)$ on $\Omega_{\Omega_{G dh}}$ and on the double of it. We get 
\[ B(z)\, =\, \frac{(M^\prime(z) \cdot G(z) - M(z) \cdot G^\prime(z))}{c^3 \cdot G(z) \cdot G^\prime(z)}. \]
Here $M(z)$ is given by:
\[ M(z) \,=\, \sum_{k=-p}^p 2a_k\prod_{j=-p, j\neq k}^p (z-t_j)^{2a_j}(z-t_k)^{2a_k-1}. \]

This equation for $B(z)$ is in terms of polynomial $t_k$, so again, using the same method as for 
$\mathrm{Re}(A)$ and $\mathrm{Im}(A)$, we see that the inverse image of the real axis and the imaginary 
axis will intersect $C_k$ at least twice.

\subsection{Final result for the nature of singularities}

We consider a maxface, which is a front as a map from $\mathcal{R}_{\Omega_{G dh}}$ to $\mathbb{R}^3$.  We refer to 
\cite{UMEHARA2006} for the definition of ``front" and ``frontal".  Umehara and Yamada in \cite{Umehara2011} 
discussed the various singularities that can appear on a maxface when it is a front. We know 
(\cite{UMEHARA2006}) that if a maxface is a front, then we must have $A(z) \,\neq\, 0$ on the singular set. 
This implies that $ G^\prime(z) \,\neq \,0$ on the singular set.

 In Table \ref{tab:Singuarties: in terms of }, 
we list some of the singularities considering the maxface as a front or frontal.  Combining the above discussion with Propositions \ref{Prop:SingularsetONRNE} and 
\ref{propSigng3}, we present the following result regarding the nature of singularities in the surfaces as in Theorem \ref{prop:maxfacefromZigzag} and Theorem \ref{thm:surfaceFromTweezer}.

\begin{theorem}\label{thm:Sing}
Let $X$ be a maxface with genus $p$ defined via a zigzag. Moreover, if this maxface is a front, then it
possesses $(p+1)$ connected components of the singular set, each of which is topologically a
circle. Each component will have:
\begin{enumerate}
    \item At least two points where we have either swallowtails, cuspidal butterflies, or the special singular points of type 1.

    \item At least two points where we have either cuspidal cross-caps, cuspidal $S_1^-$, or special singular points of type 2.

    \item Other singularities that are not any of the above are cuspidal edges.

\end{enumerate}
Furthermore, if the tweezer is of genus $1$, then the maxface from the tweezer and zigzag are the same. For genus $p\geq 2$, if the tweezer has data \((c, t_1, \ldots, t_p)\) with \(t_{-j} = -t_j\), then it possesses at least $p-1$ connected components of the singular set. Moreover, the nature of the singularities in each connected component of singularities is the same as in the case of the zigzag.

\end{theorem}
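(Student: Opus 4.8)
The plan is to assemble three ingredients, all already in place: Proposition~\ref{Prop:SingularsetONRNE} for the topology of the singular set, Proposition~\ref{propSigng3} for the distinguished points on each component, and the local classification recorded in Table~\ref{tab:Singuarties: in terms of }. Because of this, the theorem is essentially a synthesis, and the only thing requiring care is matching the hypotheses correctly.

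First I would make explicit why the front hypothesis is the hinge: by \cite{UMEHARA2006}, a maxface that is a front has $A\neq 0$ on its singular set, and since there $A(z)=\frac{G^\prime(z)}{2c\,G(z)}$ with $|G(z)|=1\neq 0$, this is equivalent to $G^\prime\neq 0$ on the entire singular set. With that in hand, Proposition~\ref{Prop:SingularsetONRNE} applies to the zigzag maxface and shows that its singular set on $\mathcal{R}_{\Omega_{Gdh}}$ is a disjoint union of $p+1$ topological circles $C_0,\dots,C_p$, and Proposition~\ref{propSigng3} shows that on each $C_k$ there are at least two points with $\mathrm{Re}(A)=0$, $\mathrm{Im}(A)\neq 0$ and at least two points with $\mathrm{Re}(A)\neq 0$, $\mathrm{Im}(A)=0$.

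Next I would carry out the bookkeeping against Table~\ref{tab:Singuarties: in terms of }. A point of $C_k$ with $\mathrm{Re}(A)\neq 0$, $\mathrm{Im}(A)=0$ is, depending on $\mathrm{Re}(B)$ and $\mathrm{Im}(E)$, exactly one of a swallowtail, a cuspidal butterfly, or a special singular point of type~$1$; that is item~(1). A point with $\mathrm{Re}(A)=0$, $\mathrm{Im}(A)\neq 0$ is, depending on $\mathrm{Im}(B)$ and $\mathrm{Re}(E)$, exactly one of a cuspidal cross-cap, a cuspidal $S_1^-$, or a special singular point of type~$2$; that is item~(2). Every other point of $C_k$ has $\mathrm{Re}(A)\neq 0$ and $\mathrm{Im}(A)\neq 0$, which the first row of the table identifies as a cuspidal edge; that is item~(3). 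For the tweezer: genus~$1$ literally is the zigzag case; for $p\geq 2$ with $t_{-j}=-t_j$, the remark following Proposition~\ref{Prop:SingularsetONRNE} gives at least $p-1$ disjoint loops and the last line of Proposition~\ref{propSigng3} gives the same two-point statements on each, so the trichotomy applies verbatim.

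Since the substantive work is upstream, there is no real obstacle in this final step. If I were building the argument from scratch, the two delicate parts --- both already settled in the propositions --- would be: bounding the number of connected components (the bound $p+1$ from $\deg g$ together with the hyperelliptic cover, and the loop structure from symmetry about the real axis), and exhibiting the distinguished points (tracking $H_1$ across $C_k$ via $H_1(t_k)=\infty$, $H_1(\infty)=0$, then doubling by reflection). The only genuine limitation --- not an obstruction --- is that without explicit Weierstrass data one cannot decide which of the three types actually occurs at a given distinguished point, which is exactly why the conclusions are stated as disjunctions.
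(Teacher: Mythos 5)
Your proposal is correct and follows essentially the same route as the paper, which likewise obtains the theorem by noting that the front hypothesis forces $A\neq 0$ (hence $G'\neq 0$) on the singular set and then combining Proposition~\ref{Prop:SingularsetONRNE}, Proposition~\ref{propSigng3}, and Table~\ref{tab:Singuarties: in terms of }. Your bookkeeping against the table and your handling of the tweezer case match the paper's (largely implicit) synthesis.
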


\begin{figure}
    \centering
    \includegraphics[scale=0.3]{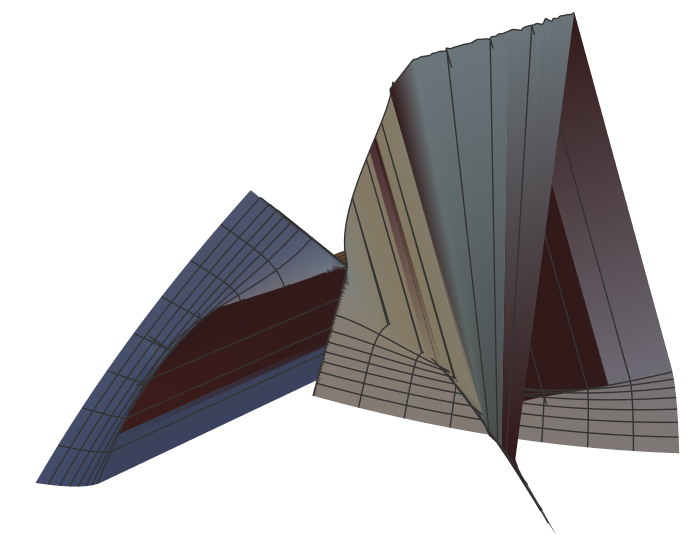}
    \def\svgwidth{0.45\columnwidth}

    \caption{Half of Lorentzian Chen-Gackstatter surface; view from two sides.}
    \label{fig:enter-label}
\end{figure}

\bibliography{ref.bib}

\end{document}